\documentclass[12pt,reqno]{amsart}
\usepackage[a4paper,margin=1in]{geometry}

\usepackage{amsmath, amsthm, amssymb, graphicx, stmaryrd}
\usepackage{extarrows}
\SetSymbolFont{stmry}{bold}{U}{stmry}{m}{n}
\usepackage[bookmarks=true, colorlinks, citecolor=blue, linkcolor=blue]{hyperref}
\usepackage{cleveref}
\usepackage{mathabx}
\usepackage{bm}
\usepackage{mathtools}  
\usepackage{extarrows}  

\usepackage[
  backend=biber,
  style=alphabetic,
  giveninits=true,
  maxbibnames=99,
  maxcitenames=99,
  maxalphanames=4,
  minalphanames=4
]{biblatex}

\newcommand{\eqviaII}[1]{\Longrightarrow{\text{(ii)}}}
\newcommand{\eqviaI}[1]{\xlongequal{\text{Step I}}}

\theoremstyle{plain}
\newtheorem{theorem}{Theorem}[section]
\newtheorem{lemma}[theorem]{Lemma}
\newtheorem{proposition}[theorem]{Proposition}

\crefname{theorem}{Theorem}{Theorems}
\crefname{definition}{Definition}{Definitions}
\crefname{lemma}{Lemma}{Lemmas}
\crefname{conjecture}{Conjecture}{Conjectures}

\theoremstyle{definition}
\newtheorem{definition}[theorem]{Definition}

\theoremstyle{remark}
\newtheorem{remark}[theorem]{Remark}

\newcommand{\bC}{\mathbb{C}}

\newcommand{\bQ}{\mathbb{Q}}
\newcommand{\bR}{\mathbb{R}}
\newcommand{\bZ}{\mathbb{Z}}

\newcommand{\bfd}{\mathbf{d}}

\newcommand{\cO}{\mathcal{O}}

\newcommand{\rme}{\mathrm{e}}

\newcommand{\Eff}{\mathrm{Eff}}
\newcommand{\ev}{\mathrm{ev}}

\title{Exponential Concentration for Quantum Periods via Mirror Symmetry}

\author{JIANXUN Hu}
\address{SCHOOL OF MATHEMATICS, SUN YAT-SEN UNIVERSITY, GUANGZHOU 510275, P.R. CHINA}
\email{stsjxhu@mail.sysu.edu.cn}

\author{HUA-ZHONG Ke}
\address{SCHOOL OF MATHEMATICS, SUN YAT-SEN UNIVERSITY, GUANGZHOU 510275, P.R. CHINA}
\email{kehuazh@mail.sysu.edu.cn}

\author{JINGWEI LU}
\address{SCHOOL OF MATHEMATICS, SUN YAT-SEN UNIVERSITY, GUANGZHOU 510275, P.R. CHINA}
\email{lujw28@mail2.sysu.edu.cn}

\date{\today}

\begin{document}

\begin{abstract}
We investigate power series satisfying the exponential concentration property, and show that suitable modifications of hypergeometric series respect this property. As a geometric application, we prove that the quantum period of a Fano manifold possesses the same property, whenever the manifold admits a convenient weak Landau–Ginzburg model with non-negative coefficients. 
\end{abstract}

\maketitle

\tableofcontents

\section{Introduction}

Let $X$ be a Fano manifold, i.e. a compact complex manifold with ample anti-canonical line bundle. Its quantum period $G_X(t)=\sum_{n\ge0}G_nt^n\in\bQ[[t]]$ is a power series with $G_n$ given by Gromov-Witten invariants \cite{CCGGK13}. The series distinguishes deformation families of Fano manifolds in dimension $\le3$ \cite{CCGK16}, and it is expected that this holds true in higher dimensions \cite{CK22}. The study of quantum periods sits at the crossroads of enumerative geometry, mirror symmetry and classification theory of Fano manifolds. It has attracted much attention in recent years.

In this article, we study the \emph{exponential concentration property} (see Definition \ref{def:exponentially-bounded}) for the summands of $G_X(t)=\sum_{n\ge0}G_nt^n$. Informally, this means that as $t\to+\infty$, the ratio of the sum of the dominant terms in $\{G_nt^n\}_{n\ge0}$ to the sum of the remaining terms grows exponentially in $t$. Our motivation comes from Gamma conjectures on the quantum cohomology of $X$ \cite{GGI2016}, which consist of Gamma conjecture I and II. Gamma I identifies a slowest-growing solution to the quantum differential equation of $X$ with the structure sheaf $\cO_X$ via the Gamma-integral structure \cite{Iritani2009IntegralStructure}. An equivalent formulation of Gamma I states that, assuming property $\cO$,
\begin{align}\label{eq-GammaIJoverG}
    \lim_{t\to+\infty}{J_X(t)\over G_X(t)}=\widehat{\Gamma}_X,
\end{align}
where $J_X(t)$ is Givental's $J$-function (an  $H^{\mathrm{ev}}(X)$-valued function), and $\widehat{\Gamma}_X\in H^{\mathrm{ev}}(X,\bR)$ is the Gamma class. We refer the reader to \cite{GGI2016,GolyshevZagier2016,GaIr2019ASPM,SandaShamoto2021,HKLY2021,Ke2024,Ch25} for progress on this conjecture, and \cite{Galkin2024RevisitingGammaI} for counterexamples and modifications. Recently, assuming (the original) Gamma I, J. Hu and H.-Z. Ke, together with C. Li and Z. Su, proposed a strategy to prove Gamma conjecture II, which identifies asymptotically exponential fundamental solutions to the quantum differential equation of $X$ with full exceptional collections in $\mathcal{D}^b(X)$. The strategy is applied to del Pezzo surfaces in a forthcoming article \cite{HKLSu26}. In a companion article \cite{HKLSo26}, the same strategy is carried out for Milnor hypersurfaces, with the additional input of mirror symmetry. Thus, beyond its intrinsic interest, Gamma I provides a pathway to  Gamma II, a conjecture that has attracted considerable attention (we refer the reader to Iritani's talk at ICM 2022 \cite{Iri23} and references therein for the status of Gamma II).

One difficulty in verifying Gamma I lies in evaluating the limit in \eqref{eq-GammaIJoverG}. Let $r$ be the Fano index of $X$, i.e., the largest positive integer such that $\frac{c_1(X)}{r}\in H^2(X,\bZ)$. Then we can write $J_X(t)=\mathrm{e}^{c_1(X)\log t}\sum_{n\ge0}J_{rn}t^{rn}$ and $G_X(t)=\sum_{n\ge0}G_{rn}t^{rn}$. In Theorem C of his paper \cite{Hugtenburg2024Quantum}, Hugtenburg showed that, assuming the ``superpolynomial peakedness" condition for $G_X(t)$---a condition that follows from exponential concentration---together with other hypotheses, the limit of the series-quotient in \eqref{eq-GammaIJoverG} equals the limit of the sequence-quotient
\begin{align}\label{eq-limequallim}
\lim_{t\to+\infty}\frac{J_X(t)}{G_X(t)}=\lim_{n\to\infty}\frac{\mathrm{e}^{c_1(X)\log\left(n\over C\right)}J_{rn}}{G_{rn}}\quad\text{($C$ depends on the concentration location)},
\end{align}
which simplifies the calculation of the LHS in \eqref{eq-GammaIJoverG}. He applied this result to prove Gamma I for a family of non-K\"ahler monotone symplectic manifolds. Implicitly, a similar method was used earlier by Golyshev--Zagier in the proof of Gamma I for Fano $3$-folds of Picard rank one \cite{GolyshevZagier2016}.

Inspired by the work of Golyshev--Zagier and Hugtenburg, we guess that the summands of quantum periods concentrate exponentially in general. The main result of this article is the following. 

\begin{theorem}\label{thm-verifyconj}(see also Theorem \ref{thm-mainthm})
    Let $X$ be a Fano manifold admitting a convenient weak Landau-Ginzburg model with non-negative coefficients. Then the summands of $G_X(t)=\sum_{n\ge0}G_nt^n$ concentrate exponentially near $n\approx T_{A,\mathrm{con}}t$.
\end{theorem}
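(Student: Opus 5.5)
Via the weak Landau--Ginzburg model $f=\sum_{a\in A} c_a x^a$ (a Laurent polynomial with $c_a\ge 0$, Newton polytope $\Delta=\mathrm{conv}(A)$ containing the origin in its interior), the quantum period is the classical period of $f$: $G_n = n!\cdot[x^0]f^n$, once the regularized/unregularized conventions are matched. My plan is to express $G_n$ as a finite sum of multinomial terms and reduce to the concentration results for modified hypergeometric series established earlier in the paper.

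First, expand $[x^0]f^n$ multinomially. Then
\[
G_n=\sum_{\substack{k\in\bZ_{\ge0}^{A}\\ \sum_a k_a=n,\ \sum_a k_a a=0}} \frac{n!\,n!}{\prod_a k_a!}\prod_a c_a^{k_a}.
\]
Hence
\[
G_X(t)=\sum_{k\in L} \frac{(|k|!)^2}{\prod_a k_a!}\prod_a (c_at)^{k_a},
\]
where $L=\{k\in\bZ_{\ge0}^A:\sum_a k_a a=0\}$ is the lattice cone of relations. This is a sum over the lattice points of a rational polyhedral cone of hypergeometric-type terms with non-negative coefficients. Non-negativity is essential: there is no cancellation, so concentration of the summands of the multi-index series pushes forward to concentration in $n=|k|$.

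Next, apply the earlier result that suitably modified hypergeometric series concentrate exponentially. Concretely, I take the log of a term via Stirling and maximize
\[
\Phi_t(k)=2|k|\log|k|-\sum_a k_a\log k_a+\sum_a k_a\log(c_at)-|k|
\]
over the real cone $L_\bR$. This function is concave after the scaling $k=t\,u$: it equals $t\,\phi(u)+O(t\log t)$ corrections that are homogeneous. The maximizer $u^*$ is determined by a critical-point equation, namely the usual Lagrange condition $u_a\propto c_a x^a$ at the critical point $x^*$ of $f$ in the positive real torus. Convenience of the LG model ensures that this critical point lies in the interior and is unique, with $T_{A,\mathrm{con}}=|u^*|$, presumably $f(x^*)$. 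Strict concavity on the cone gives a quadratic drop $\Phi_t(k)\le\Phi_t(tu^*)-c\,|k-tu^*|^2/t$ near the peak and a linear drop far away. Summing lattice points (a polynomially growing count) against Gaussian/exponential decay yields the following: the terms with $\bigl||k|-T_{A,\mathrm{con}}t\bigr|>\epsilon t$ contribute at most $e^{-\delta t}$ times the total, which is exactly Definition~\ref{def:exponentially-bounded} for $G_X$.

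The main obstacle is controlling the boundary of the cone $L_\bR$, where some $k_a=0$ or is small. There Stirling is not uniform, and concavity alone does not prevent the maximum over the lattice from sitting near a face. I would handle this by using convenience, i.e. that the origin lies in the interior of $\Delta$, so that every face of the relation cone yields a strictly smaller value of $\phi$. I would then use the crude bounds $k!\ge (k/e)^k$ and $k!\le (k+1)^{k+1}e^{-k}$ to get uniform estimates with only polynomial-in-$t$ losses, which the exponential gap absorbs. A secondary check is the matching of $G_n$ with the period sequence, including the factor $n!$, and that the resulting series falls in the class covered by the earlier hypergeometric theorem.
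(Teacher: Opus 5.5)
\textbf{Verdict.} Your route, a multi-index Laplace estimate over the relation cone, differs from the paper's and can be made to work. As written, however, its central step fails because the normalization is inverted. There is also a second gap concerning the window size.

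\textbf{Normalization.} A weak LG model gives $G_n=\mathrm{Cst}(f^n)/n!$; it is $\widehat G_X$ whose coefficients are $\mathrm{Cst}(f^n)$. So the correct series is
\[
G_X(t)=\sum_{k\in L}\prod_{a\in A}\frac{(c_at)^{k_a}}{k_a!},
\]
not the sum with $(|k|!)^2$ in the numerator. The series you wrote has $n$-th coefficient $n!\,\mathrm{Cst}(f^n)$, so its radius of convergence is zero. Correspondingly, your $\Phi_t(tu)$ contains $2t|u|\log t+2t|u|\log|u|$ and is unbounded above along rays of the cone. So the claimed concavity, and the Lagrange condition $u_a\propto c_a\mathbf{x}^a$, do not follow from what you wrote. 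This is not a secondary check.

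With the correct series, $k!\ge(k/\mathrm{e})^k$ gives the exact uniform bound $\log\prod_a (c_at)^{k_a}/k_a!\le t\,\phi(k/t)$, where $\phi(u)=\sum_a u_a(1+\log c_a-\log u_a)$. This $\phi$ is strictly concave and has no $t\log t$ term. On $\{u\ge 0:\sum_a u_a a=0\}$ it is maximized at $u^*_a=c_a\mathbf{x}_{\mathrm{con}}^a$, where $\phi(u^*)=|u^*|=T_{\mathrm{con}}$.

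This also disposes of your ``main obstacle'':
\begin{itemize}
\item the bound holds when some $k_a=0$;
\item strict concavity together with the interior maximizer handles the faces;
\item the lower bound on the total comes from one point of the relation lattice within $O(1)$ of $tu^*$, at a cost of only $t^{-O(1)}$.
\end{itemize}

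\textbf{Window size.} Your conclusion, that terms with $\bigl||k|-Tt\bigr|>\epsilon t$ carry at most $\mathrm{e}^{-\delta t}$ of the total, is not Definition~\ref{def:exponentially-bounded}. There $\nu>0$, so you must control $\bigl||k|-T_{\mathrm{con}}t\bigr|\ge Ct^{1-\nu}$, a shrinking relative window. A fixed $\epsilon$ corresponds to $\nu=0$ and does not suffice. The fix is:
\begin{itemize}
\item apply the quadratic drop uniformly at $|k/t-u^*|\gtrsim t^{-\nu}$, giving $t(\phi(k/t)-\phi(u^*))\le -c\,t^{1-2\nu}$;
\item absorb the $O(t^{|A|})$ lattice points with $|k|\le Mt$ into this exponential gap;
\item discard $|k|>Mt$ using $\phi(u)\le -|u|$ for large $|u|$.
\end{itemize}
This yields parameters $(\alpha,1-2\nu)$ with $\nu<\frac12$, as in Theorem~\ref{thm-mainthm}.

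\textbf{Smaller points.} The paper's hypergeometric theorem is one-variable: it is a sum over $n'$ with $a_{n'+1}/a_{n'}=1+O(1/n')$. It does not apply to a cone sum, so no reduction to it actually occurs in your plan. Also, ``$T_{A,\mathrm{con}}=f(\mathbf{x}^*)$, presumably'' needs an argument. Once concentration near $T_{\mathrm{con}}t$ is proved, Theorem~\ref{lem:0}(iii) supplies it, which is what the paper does.

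\textbf{Comparison with the paper.} The paper never handles the multi-index sum directly. It uses Galkin's random walk and the Local Central Limit Theorem, after replacing $f$ by $f^r$ so the walk is aperiodic, to get
\[
G_{rn'}=c\,n'^{-m/2}T_{\mathrm{con}}^{rn'}\bigl(1+O(n'^{-1/2})\bigr)/\Gamma(rn'+1).
\]
It then applies Theorem~\ref{thm-applicationmultigamma} to the comparison series with weights $n'^{-m/2}$, and transfers concentration to $G_X$ through the bounded ratios $c_n$. Your corrected argument is more elementary: it needs no LCLT and no index reduction, only $k!\ge(k/\mathrm{e})^k$, convexity and lattice-point counting. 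In exchange, it yields no asymptotics for individual $G_n$, which the paper's route gives as a by-product.
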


In the above statement, $T_{A,\mathrm{con}}:=\varlimsup_{n\to\infty}|n!G_{n}|^{1\over n}$ denotes the \emph{$A$-model conifold value} of $X$. Emerging from discussions with Hugtenburg, the concept was introduced in \cite{Galkin2024RevisitingGammaI} in order to modify Gamma I, aiming to describe the growth rate of the solution to the quantum differential equation corresponding to $\cO_X$. It is expected that $T_{A,\mathrm{con}}$ is the critical value of the Lefschetz thimble corresponding to $\cO_X$ under homological mirror symmetry. 

According to predictions of mirror symmetry, $X$ should admit a Landau-Ginzburg model $(Y,f)$, such that the $A$-side of $X$ and the $B$-side of $(Y,f)$ are equivalent. Here $Y$ is a non-compact K\"{a}hler manifold and $f$ is a holomorphic function on $Y$. In Theorem \ref{thm-verifyconj}, we assume that $X$ admits an LG model in a ``weak'' sense. A \emph{weak} LG model of $X$ \cite{Prz11} is a pair $((\bC^\times)^m,f(\mathbf{x}))$ with $m=\dim_\bC X$ and $f(\mathbf{x})\in\bC[x_1^\pm,\dots,x^\pm_{m}]$, satisfying \[G_X(t)=1+\sum_{n\ge1}\frac{\mathrm{Cst}(f^n)}{n!}t^n,\] where $\mathrm{Cst}(f^n)$ is the constant term of the Laurent polynomial $f(\mathbf{x})^n$. We call the weak LG model \emph{convenient} if $f(\mathbf{x})$ is non-zero and its Newton polytope contains the origin in its interior. It is known that several classes of Fano manifolds admit convenient weak LG models with non-negative coefficients, e.g., Fano manifolds of dimension $\le3$, toric Fano manifolds, and partial flag manifolds.

Our proof of Theorem \ref{thm-verifyconj} relies on Galkin's random walk interpretation of $\mathrm{Cst}(f^n)$ \cite{Galkin2012SplitNotes}. Combined with the Local Central Limit Theorem, this interpretation implies that $G_X(t)$ can be viewed as a modified hypergeometric series. Theorem \ref{thm:intro-exp-bounded-GX-general} below establishes the exponential concentration property for a class of such modifications, generalizing  results from \cite[Section 3]{Paris2011Laplace} and \cite[Theorem D]{Hugtenburg2024Quantum}. This theorem is potentially of independent interest. Theorem \ref{thm-verifyconj} is then established by reducing it to the setting of Theorem \ref{thm:intro-exp-bounded-GX-general}.

\begin{theorem}\label{thm:intro-exp-bounded-GX-general}(see also Theorem \ref{thm-applicationmultigamma})
Let \(p\ge 0,q>0\) be integers. Let parameters
\begin{equation*}\label{eq:general-params}
\alpha_1,\ldots,\alpha_p,
\beta_1,\ldots,\beta_q,
\quad
a_1,\ldots,a_p,
b_1,\ldots,b_q,
\quad
T,
\end{equation*}
be positive numbers such that $\kappa \;:=\; \sum_{s=1}^{q}\beta_s \;-\; \sum_{r=1}^{p}\alpha_r\in\bZ_{>0}$. Write \(C \;:=\; \prod_{r=1}^{p}\alpha_r^{\alpha_r}\;\prod_{s=1}^{q}\beta_s^{-\beta_s}\). Let \(\{a_{n'}\}_{n'\ge0}\) be a sequence in $\bR_{>0}$ such that \(\frac{a_{n'+1}}{a_{n'}}=1+O(\frac{1}{n'})\) as $n'\to\infty$, and set
\begin{equation*}
H(x):=\sum_{n'=0}^{\infty}\frac{\displaystyle\prod_{r=1}^{p}\Gamma(\alpha_r n' + a_r)}
               {\displaystyle\prod_{s=1}^{q}\Gamma(\beta_s n' + b_s)}\;
               \,a_{n'}(Tx)^{\kappa n'}.
\end{equation*}
Then the summands of $H(x)=\sum_{n\ge0}H_nx^n$ concentrate exponentially near $n\approx \kappa C^{1\over\kappa}Tx$.
\end{theorem}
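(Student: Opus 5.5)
The plan is to reduce everything to the ratio of consecutive summands and then apply a discrete Laplace-type estimate. Write $u_{n'}(x)$ for the $n'$-th summand of $H(x)$. It contributes only to $H_n$ with $n=\kappa n'$, so concentration of $\{H_nx^n\}$ near $n\approx\kappa C^{1/\kappa}Tx$ is the same as concentration of $\{u_{n'}\}$ near $n'\approx\lambda$, where
\[
\lambda:=C^{1/\kappa}Tx .
\]
I would prove this second form in the sense of Definition~\ref{def:exponentially-bounded}. Concretely, for every $\varepsilon>0$ there should be $\delta>0$ with
\[
\sum_{|n'-\lambda|>\varepsilon\lambda}u_{n'}\le e^{-\delta\lambda}\sum_{n'}u_{n'}
\]
for all large $x$.

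\textbf{Step 1: the ratio of consecutive summands.} From $\Gamma(z+\alpha)/\Gamma(z)=z^{\alpha}(1+O(1/z))$ and the hypothesis $a_{n'+1}/a_{n'}=1+O(1/n')$, I get
\[
\frac{u_{n'+1}}{u_{n'}}=\frac{\prod_r(\alpha_rn')^{\alpha_r}}{\prod_s(\beta_sn')^{\beta_s}}\,(Tx)^{\kappa}\Bigl(1+O\bigl(\tfrac1{n'}\bigr)\Bigr)=\Bigl(\frac{\lambda}{n'}\Bigr)^{\kappa}\Bigl(1+O\bigl(\tfrac1{n'}\bigr)\Bigr).
\]
The constant in the $O$ is independent of $x$. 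For the finitely many small $n'$ where the asymptotics are not yet accurate, the ratio is still bounded above and below by constant multiples of $(\lambda/n')^{\kappa}$, with $n'$ replaced by $\max(n',1)$.

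\textbf{Step 2: comparison with the peak.} Fix an integer $m\approx\lambda$ and telescope from $m$. This gives
\[
\log\frac{u_{n'}}{u_m}=-\kappa\sum_{k\in[m,n')}\log\frac{k}{\lambda}+\sum_{k\in[m,n')}O\bigl(\tfrac1k\bigr),
\]
with the analogous formula for $n'<m$. The first sum equals $-\kappa\lambda\,g(n'/\lambda)+O(\log\lambda+\log n')$, where $g(s)=s\log s-s+1$, by comparing the sum with an integral. The error sum is $O(\log(n'+1)+\log\lambda)$, i.e.\ a polynomial factor. The function $g$ is convex, vanishes only at $s=1$, and grows like $s\log s$. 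Hence for $|s-1|>\varepsilon$ we have $g(s)\ge c_\varepsilon\max(1,s)$. It follows that
\[
u_{n'}\le u_m\,\bigl(\lambda(n'+1)\bigr)^{A}e^{-\kappa c_\varepsilon\max(\lambda,n')}
\]
for all $n'$ with $|n'-\lambda|>\varepsilon\lambda$.

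\textbf{Step 3: summing.} On the lower side there are at most $\lambda$ terms. On the upper side the bound $e^{-\kappa c_\varepsilon n'}$ makes the tail summable and comparable to its first term. Both contributions together are therefore at most $u_m\lambda^{A'}e^{-\kappa c_\varepsilon\lambda}$. Since $\sum_{n'}u_{n'}\ge u_m$, the polynomial factor can be absorbed by shrinking the rate, and the estimate holds with any $\delta<\kappa c_\varepsilon$ once $x$ is large.

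\textbf{Main obstacle.} I expect the main difficulty to be uniformity of the error terms over the whole range $0\le n'<\infty$ as $x\to\infty$. There are two places to control. For $n'\ll\lambda$, the accumulated $O(1/k)$ errors over roughly $\lambda$ steps, together with the poorly approximated initial terms, must be shown to contribute only a factor $\lambda^{O(1)}$ and not something exponential. For $n'\gg\lambda$, the $s\log s$ growth must dominate the polynomial losses. I would handle both by the explicit telescoped bound above, treating the finitely many initial indices separately by constants. If the paper's definition instead compares the tail with the sum over the window near $\lambda$ rather than with the full sum, Step 3 gives that version as well, because the tail is exponentially smaller than the full sum.
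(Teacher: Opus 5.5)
\textbf{Gap: you prove concentration in the wrong window.} Definition~\ref{def:exponentially-bounded} uses cut-offs $n_\pm(x)=\lfloor f(x)(1\pm Cx^{-\nu})\rfloor$ with $\nu>0$. So the relative width of the window must \emph{shrink} like $x^{-\nu}$. What you set out to prove is weaker: for each fixed $\varepsilon>0$ there is $\delta_\varepsilon>0$ with $\sum_{|n'-\lambda|>\varepsilon\lambda}u_{n'}\le e^{-\delta_\varepsilon\lambda}\sum_{n'}u_{n'}$ for large $x$. That is the $\nu=0$ version. You give no control on how $\delta_\varepsilon$ (your $\kappa c_\varepsilon$) and the threshold in $x$ behave as $\varepsilon\to0$, so it does not imply concentration in any window $Cx^{-\nu}$. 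For example, $\delta_\varepsilon=e^{-1/\varepsilon}$ would be consistent with your statement and useless at $\varepsilon=x^{-\nu}$. The shrinking window is the real content of the notion; Proposition~\ref{prop-substitutionofg(n)tog(f(x))} depends on it. It is also why the paper's precise version, Theorem~\ref{thm-applicationmultigamma}, is stated only for $\nu\in(0,\frac12)$ with exponent $\beta=1-2\nu$.

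The repair stays inside your framework, but Step~2 must be made quantitative in $\varepsilon$.
\begin{itemize}
\item $g$ is convex with $g(1)=g'(1)=0$ and $g''(1)=1$. For $0<\varepsilon\le1$ one has $g(1-\varepsilon)\ge\varepsilon^2/2$ and $g(1+\varepsilon)\ge\varepsilon^2/3$, and $g(s)/(s-1)$ is nondecreasing for $s>1$.
\item Hence $g(s)\ge c\,\varepsilon^2\max(1,s)$ whenever $|s-1|\ge\varepsilon$, with an absolute constant $c>0$.
\item Your loss exponent $A$ does not depend on $\varepsilon$ or $x$. So your Step~2 bound becomes $u_{n'}\le u_m(\lambda(n'+1))^Ae^{-\kappa c\varepsilon^2\max(\lambda,n')}$.
\end{itemize}
Now take $\varepsilon\asymp x^{-\nu}$.
\begin{itemize}
\item The lower tail is at most $u_m x^{O(1)}e^{-\kappa c\varepsilon^2\lambda}$.
\item In the upper tail the geometric ratio $e^{-\kappa c\varepsilon^2}$ now tends to $1$. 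The tail is therefore no longer comparable to its first term, but it only costs an extra polynomial factor $O(\varepsilon^{-2})$.
\item Since $\varepsilon^2\lambda\asymp x^{1-2\nu}$, these bounds beat the $O(\log x)$ losses exactly when $\nu<\frac12$. This gives window $\asymp x^{-\nu}$ and parameters $(\alpha,1-2\nu)$.
\item The passage from $n'$ to $n=\kappa n'$ is the floor bookkeeping of Lemma~\ref{lemma-mutiplicityrelation}.
\end{itemize}

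Once repaired, your argument is a valid and more direct alternative to the paper's. The paper splits $u_{n'}=A_{n'}W_{n'}(x)$ with $W_{n'}(x)=(\kappa\lambda)^{\kappa n'}/\Gamma(\kappa n'+1)$. It proves Gaussian bounds $W_{N\pm i}\le KW_Ne^{-c_0i^2/x}$ around $N=\lfloor\lambda\rfloor$ and geometric decay beyond $(1+D)N$ (Theorems~\ref{thm:package-Pfw-exp-peak} and~\ref{thm:exp-peak-and-asymptotic}). It then absorbs $A_{n'}$ using the polynomial bound of Lemma~\ref{lem:An-basic-properties}(iv). Your single telescoping with the entropy function $g$ replaces that two-stage reduction.
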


Besides exponential concentration, we also introduce the weaker notion of \emph{superpolynomial concentration} in this article (see Definition \ref{def:exponentially-bounded}). Informally, this means that as $t\to+\infty$, the ratio of the sum of the dominant terms of $\{G_nt^n\}_{n\ge0}$ to the sum of the remaining terms grows faster than any polynomial. Our motivation is to replace the cumbersome key hypothesis (``superpolynomially peaked") in Hugtenburg's Theorem C with a clean and transparent condition. When applied to the setting of Theorem C, superpolynomial concentration implies the key hypothesis used in the original proof, while all other assumptions remain in place. Consequently, this weaker condition alone suffices to establish the equality \eqref{eq-limequallim}. More broadly, our notions of ``exponential concentration" and ``superpolynomial concentration" are inspired by, but distinct from, Hugtenburg's notions of ``exponentially bounded" and ``superpolynomially peaked". The former are conceptually simpler, and they imply the defining properties of the latter (see Remark \ref{rmk-expconcentrationimpliesexpbound} and \ref{rmk-superpolyconcentrationimpliessuperpolypeak}).

\begin{remark}
   Hugtenburg expected the summands of $G_X(t)$ to concentrate near $n\approx Tt$ \cite[Remark 1.9]{Hugtenburg2024Quantum} in the sense of ``superpolynomially peaked", where $T$ is the spectral radius of quantum multiplication by $c_1(X)$. This is similar in spirit to Theorem \ref{thm-verifyconj}, but differs both in the precise concentration condition and in the location of concentration: our result establishes exponential concentration near $n\approx T_{A,\mathrm{con}}t$, and there are toric Fano manifolds with $T_{A,\mathrm{con}}\ne T$ \cite{Galkin2024RevisitingGammaI}. 
\end{remark}

This article is organized as follows. In Section 2, we study general properties of power series with exponential and superpolynomial concentration. In Section 3, we establish exponential concentration property for suitable modifications of hypergeometric series. In Section 4, we briefly review quantum periods and prove Theorem \ref{thm-verifyconj}.

 \textbf{Acknowledgements} 
The authors would like to thank Hiroshi Iritani, Kai Hugtenburg, Changzheng Li, Weiqiang He and Chaozhong Wu for helpful discussions. J. Hu and H.-Z. Ke are indebted to Sergey Galkin and Hiroshi Iritani for teaching them the random walk interpretation during earlier joint work, which is crucial in the proof of Theorem \ref{thm-verifyconj}. This work is supported in part by the National Key R \&  D Program of China No.~2023YFA1009801. J. Hu is also supported in part by NSFC Grant 12531003, and Science and Technology Planning Project of Guangdong Province(2025B1212140004). H.-Z. Ke is also supported in part by NSFC Grant 12271532.

\vspace{0.5cm}

\section{Exponential and superpolynomial concentration}
\label{subsec:exp-vs-superpoly}

In this section, we study general properties of power series with exponential concentration or superpolynomial concentration. The main results are Theorem \ref{lem:0} and Propositions \ref{lem:superpolynomial-peak-subpolynomial-closed}, \ref{prop-substitutionofg(n)tog(f(x))}.

For a power series $I(x)=\sum_{n\ge 0} a_n x^n\in\bR[[x]]$, we say that $I(x)$ is \emph{absolutely monotonic} if $I(x)$ converges for all $x>0$ and all $a_n$'s are non-negative.

\begin{definition}\label{def:exponentially-bounded}
Let $I(x)=\sum_{n\ge0}a_nx^n\ne0$ be absolutely monotonic, $f(x)$ a real polynomial with positive leading term, and $C,\nu,\alpha,\beta$ be positive numbers. Set $n_{\pm}(x):= \lfloor f(x)(1 \pm Cx^{-\nu}) \rfloor$ We say that \emph{the summands of $I(x)$ concentrate exponentially near $n\approx f(x)$ with window term $Cx^{-\nu}$ and parameters \((\alpha,\beta)\)}, if  as \(x\to+\infty\), we have
\begin{equation}\label{eq:exponentially-bounded}
\frac{1}{I(x)}\sum_{n=0}^{n_-(x)} a_n x^n = O\!\big(e^{-\alpha x^{\beta}}\big),
\quad
\frac{1}{I(x)}\sum_{n=n_+(x)}^{\infty} a_n x^n = O\!\big(e^{-\alpha x^{\beta}}\big);
\end{equation}we say that \emph{the summands of $I(x)$ concentrate superpolynomially near $n\approx f(x)$ with window term $Cx^{-\nu}$}, if for any $p\in\bR_{>0}$, as \(x\to+\infty\), we have
\begin{equation}\label{eq-defsuperpolynomialpeak}
\frac{1}{I(x)}\sum_{n=0}^{n_-(x)} a_n x^n=o(x^{-p}),\quad\frac{1}{I(x)}\sum_{n=n_+(x)}^{\infty} a_n x^n=o(x^{-p}).
\end{equation}
\end{definition}

\begin{lemma}\label{lemma-superpolynomialpeakparameters}
Let $I(x)=\sum_{n\ge0}a_nx^n\ne0$ be absolutely monotonic, and $f(x)$ a polynomial with positive leading term. Suppose that the summands of $I(x)$ concentrate superpolynomially near $n\approx f(x)$ with window term $Cx^{-\nu}$. Then:
\begin{enumerate}
\item[(i)] $f(x)$ is non-constant;
\item[(ii)] $\nu\le\deg f$;
\item[(iii)] there are infinitely many $n$'s with $a_n>0$
\end{enumerate}
\end{lemma}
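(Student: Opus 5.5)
The plan is to argue by contradiction in each part, using one basic fact: both tail sums in \eqref{eq-defsuperpolynomialpeak} tend to $0$. Taking $p=1$, for large $x$ their sum is less than $1$. Hence the middle block
\[
\frac{1}{I(x)}\sum_{n_-(x)<n<n_+(x)}a_nx^n
\]
tends to $1$, and in particular this index range must contain some $n$ with $a_n>0$ for all large $x$. All three claims come from showing that, otherwise, this middle window is eventually empty or fails to carry the mass.

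For (iii), suppose only finitely many $a_n$ are positive. Since $I\ne0$, let $N$ be the largest index with $a_N>0$. Then $I(x)\sim a_Nx^N$ as $x\to+\infty$, and $a_Nx^N/I(x)\to1$. The term $n=N$ lies either in $[0,n_-(x)]$ or in $[n_+(x),\infty)$, unless $n_-(x)<N<n_+(x)$. In the first two cases one of the tails is at least $a_Nx^N/I(x)\to1$, which is impossible. So for all large $x$ we need $N<n_+(x)$ and $n_-(x)<N$.

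I will handle this together with (i). If $\deg f\ge1$, then $f(x)(1-Cx^{-\nu})\to\infty$ once $\nu>0$ makes the correction small. So $n_-(x)\to\infty$, hence eventually $n_-(x)\ge N$, a contradiction; this proves (iii) when $f$ is non-constant. Thus it suffices to prove (i), after which (iii) follows.

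For (i), suppose $f\equiv c>0$ is constant. Then $n_\pm(x)=\lfloor c(1\pm Cx^{-\nu})\rfloor$ are eventually constant, equal to integers $m_-\le m_+$, and the middle window $(m_-,m_+)$ is a fixed finite set of indices. The ratio
\[
\frac{1}{I(x)}\sum_{m_-<n<m_+}a_nx^n
\]
must tend to $1$. But the upper tail contains all $n\ge m_+$. If some $a_n>0$ with $n\ge m_+$, that single term eventually dominates the finitely many lower-degree terms of the window as $x\to\infty$, so the middle ratio tends to $0$. If no such $a_n$ exists, $I$ is a polynomial of degree less than $m_+$, whose top term lies in the window or below it. To close this case I will use the explicit shape of the floors: for large $x$, $c(1+Cx^{-\nu})$ is slightly above $c$ and $c(1-Cx^{-\nu})$ slightly below. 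This gives $m_+=\lfloor c\rfloor$ when $c\notin\bZ$, and then the open interval $(m_-,m_+)$ is empty. When $c\in\bZ$, we get $m_-=c-1$ and $m_+=c$, so the interval $(c-1,c)$ again contains no integer. Either way the middle window is eventually empty, which contradicts the mass tending to $1$. This proves (i), and hence (iii).

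For (ii), suppose $\nu>d:=\deg f\ge1$. Then $f(x)Cx^{-\nu}\to0$, so $n_+(x)$ and $n_-(x)$ differ from $\lfloor f(x)\rfloor$ by at most one and the window $(n_-,n_+)$ contains at most one integer. The step I expect to be the main obstacle is making the window empty or negligible. It is empty whenever $f(x)$ is not within $o(1)$ of an integer from below; more precisely, it equals $\{\lfloor f(x)\rfloor\}$ only if $\{f(x)\}<Cf(x)x^{-\nu}$. Since $f$ is non-constant, I will choose a sequence $x_k\to\infty$ with $f(x_k)=k+\tfrac12$, which exists by continuity and monotonicity of $f$ for large $x$. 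At these points the window is empty for large $k$. Then both tails together cover all of $I$, yet each is $o(1)$, a contradiction. Note that the definition requires the $o$-bounds for all large $x$, so evaluating along this sequence suffices.
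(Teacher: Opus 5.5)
Your proposal is correct and follows essentially the same route as the paper. In (i) you compute the eventually constant floors to see that the window $(n_-(x),n_+(x))$ contains no integer; in (ii) you evaluate along $x_k$ with $f(x_k)\in\bZ+\tfrac12$; and (iii) follows from (i) because $n_-(x)\to\infty$, where your use of the dominant top term $a_Nx^N$ is an inessential variant of the paper's remark that the whole polynomial eventually lies in the lower tail.

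One small inaccuracy in (ii) does not affect the argument: once $2Cf(x)x^{-\nu}<1$ one has $n_+(x)-n_-(x)\le 1$, so the window is always empty rather than containing ``at most one integer,'' and the special sequence $x_k$ is not actually needed.
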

\begin{proof}
We set $n_{\pm}(x):= \lfloor f(x)(1 \pm Cx^{-\nu}) \rfloor$.

For (i), we argue by contradiction and assume that $f(x)$ is a constant $c>0$. If $c$ is an integer, then for $x\gg 0$, we have $n_-(x)=c-1$ and $n_+(x)=c$, implying
\[
0=\lim_{x\to+\infty}\frac{1}{I(x)}\left(\sum_{n=0}^{c-1}a_nx^n+\sum_{n\ge c}a_nx^n\right)=1,\]
a contradiction. So $c$ is not an integer, and then for $x\gg 0$, we have $n_-(x)=n_+(x)=\lfloor c\rfloor$, implying
\[
0=\lim_{x\to+\infty}\frac{1}{I(x)}\left(\sum_{n=0}^{\lfloor c\rfloor}a_nx^n+\sum_{n\ge \lfloor c\rfloor}a_nx^n\right)\ge 1,
\]
a contradiction. This proves (i).

For (ii), we argue by contradiction and assume that $\nu>\deg f$. Let $\{x_m\}_{m\ge1}$ be a strictly increasing sequence tending to $+\infty$, such that $f(x_m)\in\bZ+{1\over2}$ for any $m$. Then $\lim_{m\to\infty}f(x_m)x_m^{-\nu}=0$ since $\nu>\deg f$. So $n_-(x_m)=n_+(x_m)=\lfloor f(x_m)\rfloor>0$ for $m>> 0$, implying
\[
0=\lim_{m\to\infty}\frac{1}{I(x_m)}\left(\sum_{n=0}^{\lfloor f(x_m)\rfloor}a_nx_m^n+\sum_{n\ge \lfloor f(x_m)\rfloor}a_nx_m^n\right)\ge1,
\]
a contradiction. This proves (ii).

For (iii), we argue by contradiction and assume that $I(x)$ is a polynomial. From (i) and (ii), we see that $n_-(x)\ge\deg I(x)$ when $x\gg 0$, implying $0=\lim_{x\to+\infty}\frac{1}{I(x)}\sum_{n=0}^{n_-(x)}a_nx^n=1$, a contradiction. This proves (iii).
\end{proof}

\begin{theorem}\label{lem:0}
Let $I(x)=\sum_{n\ge0}a_nx^n\ne0$ be absolutely monotonic, and $f(x)$ a polynomial of degree $d$ with positive leading coefficient $c_d$. Suppose that the summands of $I(x)$ concentrate superpolynomially near $n\approx f(x)$ with window term $Cx^{-\nu}$. For $x>0$, set $n_{\pm}(x):= \lfloor f(x)(1 \pm Cx^{-\nu}) \rfloor$ and \(\mu(x):=\max\{a_nx^n|n\ge0\}\).
Then:
\begin{enumerate}
\item[(i)] there exists \(X>0\) such that when \(x>X\), if $n\in\bZ_{\ge0}$ satisfies \(a_nx^n=\mu(x)\), then \(n\in (n_-(x),n_+(x))\);
\item[(ii)] 
\(
\lim_{x\to+\infty} \frac{\log \mu(x)}{x^d} = \lim_{x\to+\infty} \frac{\log I(x)}{x^d} = \frac{c_d}{d};
\)
\item[(iii)] \(\varlimsup_{n\to+\infty}\left(a_n\cdot\Gamma(\frac{n}{d}+1)\right)^{\frac{1}{n}}=\sqrt[d]{\frac{c_d}{d}}\).
\end{enumerate}
\end{theorem}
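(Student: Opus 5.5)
The plan is to prove (i) by comparing the largest term $\mu(x)$ with the two tails, (ii) by viewing $\log\mu$ as a convex function of $\log x$ whose slope is the maximizing index, and (iii) by Legendre duality applied to the growth found in (ii). Throughout, Lemma \ref{lemma-superpolynomialpeakparameters} gives $d\ge1$, $\nu\le d$, and infinitely many $a_n>0$. In particular $\mu(x)$ is attained at finitely many indices, all of which tend to infinity.

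\textbf{Step (i).} First I bound $I(x)$ by a polynomial multiple of $\mu(x)$. Split $I(x)$ into the lower tail $n\le n_-(x)$, the window $n_-(x)<n<n_+(x)$, and the upper tail $n\ge n_+(x)$. The window has at most $2Cf(x)x^{-\nu}+2\le P(x)$ terms, for a polynomial $P$ of degree $\le d$, and each term is $\le\mu(x)$. The two tails together are $o(1)\,I(x)$, so for $x\gg0$,
\[
\mu(x)\le I(x)\le 2P(x)\mu(x).
\]
Now suppose $a_nx^n=\mu(x)$ with $n\notin(n_-(x),n_+(x))$. Then $\mu(x)$ is bounded by a tail sum, which is $o(x^{-p})I(x)$ for every $p$. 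Taking $p=d+1$ gives $I(x)\le 2P(x)\,o(x^{-d-1})I(x)$, which is impossible for large $x$. Hence a suitable $X$ exists.

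\textbf{Step (ii).} Set $u=\log x$ and $g(u)=\log\mu(e^u)=\sup_n(\log a_n+nu)$. This is a supremum of affine functions with slopes $n$, so it is convex and piecewise linear on $u>0$. Its right derivative at $u$ equals a maximizing index $n(e^u)$, namely the largest one. By (i), $n(x)\in(n_-(x),n_+(x))$, so $n(x)=f(x)(1+O(x^{-\nu}))=c_dx^d(1+o(1))$. Since $g$ is convex it is absolutely continuous, and therefore
\[
\log\mu(x)=\log\mu(x_0)+\int_{x_0}^{x}n(s)\,\frac{ds}{s}=\frac{c_d}{d}x^d(1+o(1)),
\]
using $d\ge1$. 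The sandwich from Step (i) then gives $\log I(x)=\log\mu(x)+O(\log x)$, so the two limits coincide. This is the step I expect to need the most care: one must justify that the derivative of $g$ is the maximizing index and that the integral formula holds despite the kinks. I would handle it by working directly with the piecewise-linear structure, since $g$ is linear between consecutive switch points.

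\textbf{Step (iii).} Put $b_n=a_n\Gamma(\frac nd+1)$ and $L=(c_d/d)^{1/d}$.

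For the upper bound, $a_n\le\mu(x)x^{-n}$ for every $x$. By (ii), for any $\varepsilon>0$ and $x\ge x_\varepsilon$ we have $a_n\le\exp\bigl((c_d/d+\varepsilon)x^d\bigr)x^{-n}$. Choose $x^d=n/(c_d+d\varepsilon)$, which is $\ge x_\varepsilon^d$ for large $n$, and apply Stirling to $\Gamma(\frac nd+1)$. This gives $\varlimsup b_n^{1/n}\le (c_d/d+\varepsilon)^{1/d}$.

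For the lower bound, argue by contradiction. Suppose $\varlimsup b_n^{1/n}<L'<L$. Then $a_n\le K L'^n/\Gamma(\frac nd+1)$ for all $n$, so $I(x)\le K\,E(L'x)$ with $E(z)=\sum_n z^n/\Gamma(\frac nd+1)$. By Stirling, the largest term of $E(z)$ is $\exp(z^d(1+o(1)))$. Terms with $n>2^d e\, d\, z^d$ decay geometrically, since their ratio is at most $\tfrac12$ for large $z$. Hence $\log E(z)\le z^d(1+o(1))$, and so $\log I(x)\le (L'x)^d(1+o(1))$. This contradicts (ii), which says $\log I(x)\sim (Lx)^d$.
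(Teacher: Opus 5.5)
Your proposal is correct and follows essentially the same route as the paper. Part (i) compares $\mu(x)$ with the average of the window terms. Part (ii) integrates $\frac{\mathrm{d}}{\mathrm{d}x}\log\mu(x)=t(x)/x$, with the maximizing index $t(x)$ pinned inside the window by (i), and then uses the sandwich $\mu\le I\le(\text{poly})\cdot\mu$. Part (iii) gets the upper bound from $a_n\le\mu(x)x^{-n}$ (or $I(x)x^{-n}$) at $x^d\approx n/c_d$ plus Stirling, and the lower bound by contradiction with (ii) via comparison with $\sum_n z^n/\Gamma(\frac nd+1)$.

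The differences are only in presentation. Your convexity of $u\mapsto\log\mu(e^u)$ replaces the paper's hands-on argument that $t(x)$ is piecewise constant with no accumulating jumps. You also bound the Mittag-Leffler-type series by hand, where the paper cites its asymptotic $\sim d\,e^{z^d}$.
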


\begin{proof}
For (i), by Lemma \ref{lemma-superpolynomialpeakparameters} (i), there exists \(X_0>0\) such that, when $x>X_0$, we have
\begin{align*}
f(x)(1-\frac{3}{2}Cx^{-\nu})<n_-(x)&+1<n_+(x)\le f(x)(1+Cx^{-\nu})\\
\Rightarrow&\; n_+(x)-n_-(x)-1\le {5\over2}Cf(x)x^{-\nu}.
\end{align*}
By \eqref{eq-defsuperpolynomialpeak}, there exists \(X>X_0\) such that, for any $x>X$, we have
\[
\sum_{n_-(x)<n<n_+(x)}a_n x^n > \frac{1}{2}I(x)
\]
and, when $n\le n_-(x)$ or $n\ge n_+(x)$,
\[
a_n x^n < \frac{1}{5}\left(Cf(x)x^{-\nu}\right)^{-1}I(x).
\]
Using
\[
\mu(x)\cdot(n_+(x)-n_-(x)-1)\ge\sum_{n_-(x)<n<n_+(x)}a_nx^n,
\]
we get
\begin{align}\label{eq-inlemma0}
\mu(x)\ge\frac{\sum_{n_-(x)<n<n_+(x)}a_n x^n}{n_+(x)-n_-(x)-1}>\frac{1}{5}\left(Cf(x)x^{-\nu}\right)^{-1}I(x).
\end{align}
This proves (i). For $x>X$, let \(t(x):=\max\{n|a_nx^n=\mu(x),n_-(x)<n<n_+(x)\}\).

Now we prove (ii). Let $x_0>X$ be arbitrary. By continuity of $f(x)(1\pm Cx^{-\nu})$, there exists $\delta>0$ such that $x_0-\delta>X$ and for each $x\in(x_0-\delta,x_0+\delta)$, we have $n_-(x_0)-1\le n_-(x)< n_+(x)\le n_+(x_0)+1$, implying
\[
\mu(x)=\max\{a_nx^n|n_-(x_0)-1\le n\le n_+(x_0)+1\}.
\]
So $\mu(x)$ is continuous at $x_0$. Moreover, note that when $n_-(x_0)-1\le n\le n_+(x_0)+1$, we have $a_{t(x_0)}x_0^{t(x_0)}\ge a_nx_0^n$, and the equality holds only if $n\le t(x_0)$. This implies that there exists $\delta'\in(0,\delta)$ such that for any $x\in(x_0,x_0+\delta')$, we have $a_{t(x_0)}x^{t(x_0)}>a_nx^n$ when $n_-(x_0)-1\le n\le n_+(x_0)+1$ and $n\ne t(x_0)$, and for any $x\in(x_0-\delta',x_0)$, we have $a_{t(x_0)}x^{t(x_0)}>a_nx^n$ when $n_-(x_0)-1\le n\le n_+(x_0)+1$ and $a_{t(x_0)}x_0^{t(x_0)}\ne a_nx_0^n$. So $t(x)=t(x_0)$ for $x\in[x_0,x_0+\delta')$, and $t(x)$ is discontinuous at $x_0$ iff there exists $n< t(x_0)$ such that $a_{t(x_0)}x_0^{t(x_0)}=a_nx_0^n$. In sum, on $(X,+\infty)$, $\mu(x)$ is continuous, $t(x)$ is right continuous and piecewise constant, and $t(x)$ is discontinuous at $x_0>X$ only if there are $n\ne n'$ such that $a_nx_0^n=a_{n'}x_0^{n'}$.

Let us show that the set $S\subset(X,+\infty)$ of discontinuous points of $t(x)$ has no limit points. For any $B>X$, choose $B'$ large enough so that $n_+(x)=\lfloor f(x)(1+Cx^{-\nu})\rfloor<B'$ for all $x<B$. Then $t(x)$ is discontinuous at $x_0\in (X,B)$ only if there are $(n_1,n_2)\in B'\times B'$ with $n_1<n_2$ satisfying $a_{n_1}x_0^{n_1}=a_{n_2}x_0^{n_2}$; conversely, for each $(n_1,n_2)\in B'\times B'$ with $n_1<n_2$, there is at most one point $x_0\in(X,B)$ satisfying $a_{n_1}x_0^{n_1}=a_{n_2}x_0^{n_2}$. Note that there are only finitely many pairs $(n_1,n_2)\in B'\times B'$ with $n_1<n_2$, implying that $S\cap (X,B)$ is a finite set. This shows that $S$ has no limit points.

So there exists \(X<x_1<x_2<\dots\) and integer sequence $t_1,t_2,\dots$ such that we can write
\[
t(x) = t_k \text{ for } x\in[x_k,x_{k+1}),\; k=1,2,\dots.
\]
In each \([x_k,x_{k+1})\) we have \(\mu(x)=a_{t_k}x^{t_k}\), and therefore
\[
\frac{\mathrm{d}}{\mathrm{d}x}\log\mu(x) = \frac{\mathrm{d}}{\mathrm{d}x}\log(a_{t_k} x^{t_k}) = \frac{t_k}{x} = \frac{t(x)}{x},
\]
implying that
\[
\log\mu(z_2)-\log\mu(z_1) = \int_{z_1}^{z_2}\frac{t(s)}{s}\mathrm{d}s,\quad\forall z_1,z_2\in[x_k, x_{k+1}).\]
By continuity of $\mu(x)$, taking summation gives
\[
\log\mu(z_2)-\log\mu(z_1) = \int_{z_1}^{z_2}\frac{t(s)}{s}\mathrm{d}s,
\quad\forall z_1,z_2\ge x_1.
\]
By (i), for $s>X$, we have $n_-(s) < t(s) < n_+(s)$. Since \(d\ge1\) by Lemma \ref{lemma-superpolynomialpeakparameters} (i), it follows that as $s\to+\infty$, we have $n_{\pm}(s) \sim f(s) \sim c_d s^d$, implying $t(s) \sim c_d s^d$. Therefore, for any $\epsilon>0$, there exists $M_0>x_1$ such that for all $s>M_0$, $c_d(1-\epsilon) s^{d-1} < \frac{t(s)}{s} < c_d(1+\epsilon) s^{d-1}$. Then
\[
\int_{M_0}^x c_d(1-\epsilon) s^{d-1} \mathrm{d}s < \log \mu(x) - \log \mu(M_0) < \int_{M_0}^x c_d(1+\epsilon) s^{d-1} \mathrm{d}s,\quad\forall x>M_0.
\]
Note that
\(
\int_{M_0}^x c_d(1\pm\epsilon) s^{d-1} \mathrm{d}s=\frac{c_d}{d}(1\pm\epsilon)(x^d-M_0^d).
\)
As a consequence, there exists $M>M_0$ such that for any $x>M$, we have
\[
\frac{c_d}{d}(1 - 2\epsilon) x^d < \log \mu(x) - \log \mu(M_0) < \frac{c_d}{d}(1 + 2\epsilon) x^d.
\]
This proves (ii) for \(\mu(x)\).
For \(I(x)\), when \(x>X\), by \eqref{eq-inlemma0} we have \(I(x) < 5Cf(x)x^{-\nu}\mu(x)\).
Therefore, as $x\to+\infty$,
\[
\log \mu(x) < \log I(x) < \log \mu(x) + \log \left(5Cf(x)x^{-\nu}\right) = \log \mu(x) + O(\log x).
\]
This proves (ii).

It remains to prove (iii).
By (ii), for any \(\epsilon>0\), there exists \(X_\epsilon>0\) such that when \(x>X_\epsilon\), we have \(\log I(x)<\frac{c_d(1+\epsilon)}{d}x^d\).
Define \(x_n := (\frac{n}{c_d})^\frac{1}{d}\), i.e. \(n=c_d{x_n}^d\). Then there exists \(N_1>0\) such that, when \(n>N_1\), we have \(x_n>X_\epsilon\), implying \(\log I(x_n) < \frac{c_d(1+\epsilon)}{d}x_n^d = (1+\epsilon)\frac{n}{d}\), and consequently \(a_nx_n^n<I(x_n)\) gives
\[
\log a_n < \log I(x_n)-n\log x_n < (1+\epsilon)\frac{n}{d}-\frac{n}{d}\log\frac{n}{c_d}=\frac{n}{d}(1-\log \frac{n}{d})+\frac{n}{d}(\epsilon+\log \frac{c_d}{d}).
\]
Therefore, we have \(a_n < (\frac{n}{de})^{-\frac{n}{d}}\cdot(\sqrt[d]{e^\epsilon\frac{c_d}{d}})^n\) for any $n>N_1$, and by Stirling's formula, for any \(\epsilon'>\epsilon\), there exists \(N_2>N_1\) such that
\[
a_n < \frac{2\sqrt{2\pi\frac{n}{d}}}{\Gamma(\frac{n}{d}+1)}\cdot(\sqrt[d]{e^\epsilon\frac{c_d}{d}})^n < \frac{1}{\Gamma(\frac{n}{d}+1)}\cdot(\sqrt[d]{e^{\epsilon'} \frac{c_d}{d}})^n,\quad\forall n>N_2.
\]
Noting that \(0<\epsilon<\epsilon'\) can be freely chosen, this proves that
\[
\varlimsup_{n\to+\infty}\left(a_n\cdot\Gamma(\frac{n}{d}+1)\right)^{\frac{1}{n}}\le\sqrt[d]{\frac{c_d}{d}}.
\]
For the equality, we argue by contradiction and assume that there exists \(\epsilon''\in(0,\sqrt[d]{\frac{c_d}{d}})\) and \(N''>0\) such that \(\left(a_n\cdot\Gamma(\frac{n}{d}+1)\right)^{\frac{1}{n}}<\epsilon'',\;\forall n>N''\). Then \(I(x)\le H(x)+\sum_{n\ge0}\frac{1}{\Gamma(\frac{n}{d}+1)}(\epsilon''x)^n\) for any $x\ge0$, where \(H(x)\) is a polynomial. By the asymptotics of hypergeometric series (see \cite[Section 6]{Paris2011Laplace}), as \(x\to+\infty\), we have
\[
\sum_{n\ge0}\frac{1}{\Gamma(\frac{n}{d}+1)}(\epsilon''x)^n\sim de^{(\epsilon''x)^d},
\]
implying
\[
\log I(x)<\log\left(2\sum_{n\ge0}\frac{1}{\Gamma(\frac{n}{d}+1)}(\epsilon''x)^n\right)\sim\epsilon''^d x^d,
\]
contradicting (ii). This proves (iii).
\end{proof}

\begin{lemma}\label{lemma-superpolynomialpeakparameterchange}
Let $I(x)=\sum_{n\ge0}a_nx^n\ne0$ be absolutely monotonic, and $f(x)$ (resp. $g(x)$) a polynomial of degree $d_f$ (resp. $d_g$) with positive leading coefficient $c_f$ (resp. $c_g$). Suppose that the summands of $I(x)$ concentrate superpolynomially near $n\approx f(x)$ with window term $Cx^{-\nu}$. Then the following hold.

\begin{enumerate}
\item[(i)]
For any $C'\ge C$ and $\nu'\in(0,\nu]$, the summands of $I(x)$ concentrate superpolynomially near
$n\approx f(x)$ with window exponent $C'x^{-\nu'}$.

\item[(ii)]
The summands of $I(x)$ concentrate superpolynomially near $n\approx g(x)$ (possibly with different window terms) if and only if $d_f=d_g$ and $c_f=c_g$.
\end{enumerate}
\end{lemma}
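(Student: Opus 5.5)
For (i), I would argue by monotonicity of the window. Fix $C'\ge C$ and $\nu'\in(0,\nu]$. For $x\ge1$ we have $C'x^{-\nu'}\ge Cx^{-\nu}$. Hence, writing $n'_\pm(x):=\lfloor f(x)(1\pm C'x^{-\nu'})\rfloor$, we get $n'_-(x)\le n_-(x)$ and $n'_+(x)\ge n_+(x)$ once $x$ is large enough that $f(x)>0$. This uses Lemma \ref{lemma-superpolynomialpeakparameters} (i), since $f$ has positive leading coefficient.

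Since all $a_nx^n\ge0$, the tail sums over $n\le n'_-(x)$ and $n\ge n'_+(x)$ are bounded above by those over $n\le n_-(x)$ and $n\ge n_+(x)$. The latter are $o(x^{-p})\,I(x)$ for every $p>0$, which gives (i).

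For the "only if" direction of (ii), I would use Theorem \ref{lem:0} (ii). Suppose the summands concentrate superpolynomially both near $f$ and near $g$. Applying Theorem \ref{lem:0} (ii) to each gives
\[
\lim_{x\to+\infty}\frac{\log I(x)}{x^{d_f}}=\frac{c_f}{d_f},\qquad \lim_{x\to+\infty}\frac{\log I(x)}{x^{d_g}}=\frac{c_g}{d_g},
\]
with both limits positive, since $d_f,d_g\ge1$ by Lemma \ref{lemma-superpolynomialpeakparameters} (i). If $d_f<d_g$, then $\log I(x)/x^{d_g}=(\log I(x)/x^{d_f})\cdot x^{d_f-d_g}\to0$, which contradicts $c_g/d_g>0$. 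Hence $d_f=d_g$, and then $c_f/d_f=c_g/d_g$ gives $c_f=c_g$. Alternatively, Theorem \ref{lem:0} (iii) yields the same conclusion directly from the coefficients $a_n$.

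For the "if" direction, assume $d:=d_f=d_g$ and $c_f=c_g$. Then $\deg(f-g)\le d-1$, so $|f(x)-g(x)|\le Kx^{d-1}$ for large $x$. I would choose window parameters $\nu':=\min(\nu,1)/2$ and some $C'$, and aim to show
\[
g(x)(1-C'x^{-\nu'})\le f(x)(1-Cx^{-\nu})\quad\text{and}\quad g(x)(1+C'x^{-\nu'})\ge f(x)(1+Cx^{-\nu})\quad\text{for large } x.
\]
Both inequalities hold because $f$, $g$ and $Cx^{-\nu}f$ differ from one another by $O(x^{d-1})+O(x^{d-\nu})$. This is $o(x^{d-\nu'})$, while $C'x^{-\nu'}g(x)\sim C'c_dx^{d-\nu'}$, which dominates.

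Taking floors preserves these inequalities. So the outer tails for $g$ with window $C'x^{-\nu'}$ are contained in the outer tails for $f$ with window $Cx^{-\nu}$, and are therefore $o(x^{-p})I(x)$ by the same nonnegativity argument as in (i).

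The main obstacle is only bookkeeping in this "if" direction. The window exponent must be strictly smaller than both $\nu$ and $1$ so that the window absorbs the degree-$(d-1)$ discrepancy between $f$ and $g$. This is why the statement allows the window terms to change.
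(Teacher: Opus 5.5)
Your proposal is correct and follows the paper's proof essentially step for step. Part (i) uses monotonicity of the window together with non-negativity of the summands. The ``only if'' part of (ii) uses Theorem~\ref{lem:0}~(ii), and the ``if'' part nests the $g$-window outside the $f$-window. Your choice $\nu'<\min(\nu,1)$ is in fact slightly more careful than the paper's $\nu'\in(0,\nu)$: absorbing the degree-$(d-1)$ discrepancy $f-g$ into $C'g(x)x^{-\nu'}$ fails for $\nu'>1$, and the paper's wording allows such $\nu'$ when $\nu>1$.
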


\begin{proof}
For $C'>0,\,\nu'>0$, and a polynomial $h(x)$, we set 
\[
n^{h,C',\nu'}_{\pm}(x):=\bigl\lfloor h(x)(1\pm C'x^{-\nu'})\bigr\rfloor.
\]

For (i), since $C'\ge C$ and $\nu'\in(0,\nu]$, it follows that when $x\gg0$, we have
\[
n^{f,C',\nu'}_-(x)\le n^{f,C,\nu}_-(x),
\quad
n^{f,C,\nu}_+(x)\le n^{f,C',\nu'}_+(x),
\]
implying
\[
\sum_{n=0}^{\,n^{f,C',\nu'}_-(x)} a_nx^n
\le
\sum_{n=0}^{\,n^{f,C,\nu}_-(x)} a_nx^n,
\quad
\sum_{n=n^{f,C',\nu'}_+(x)}^\infty a_nx^n
\le
\sum_{n=n^{f,C,\nu}_+(x)}^\infty a_nx^n.
\]
This proves (i).

For (ii), the 'only if' part comes from Theorem \ref{lem:0} (ii). For the 'if' part, note that $f(x)-g(x)$ is a polynomial with degree$< d_f=d_g$. Let $\nu'\in(0,\nu)$ and $C'>0$. When $x\gg0$, we have \(|f(x)-g(x)|+Cf(x)x^{-\nu}\le C'g(x)x^{-\nu'}\), implying
\[
Cf(x)x^{-\nu}-C'g(x)x^{-\nu'}\le f(x)-g(x)\le C'g(x)x^{-\nu'}-Cf(x)x^{-\nu}.
\]
Now the left inequality gives $n^{g,\nu',C'}_-(x)\le n^{f,\nu,C}_-(x)$, and the right inequality gives $n^{f,\nu,C}_+(x)\le n^{g,\nu',C'}_+(x)$. This proves (ii).
\end{proof}

\begin{remark}\label{remark-monomial-peak-reduction}
By Lemma \ref{lemma-superpolynomialpeakparameterchange} (ii), the summands of $I(x)$ concentrate superpolynomially near $n\approx c_fx^{d_f}$.
\end{remark}

\begin{definition}\label{def:subpolynomial}
A sequence $\{b_n\}_{n\ge0}$ in $\bR$ is called \emph{subpolynomial} if
$\displaystyle \lim_{n\to\infty}\frac{b_n}{n^p}=0$ for every $p\in\bR_{>0}$. 
\end{definition}

\begin{remark}\label{remark-boundforsubpolynomialseq}
An equivalent characterization for $\{b_n\}$ being subpolynomial is as follows: for each $p>0$, there exists $A_p>0\,,B_p>0$ such that $|b_n|\le A_p+B_pn^p$ for all $n$. 
\end{remark}

\begin{proposition}\label{lem:superpolynomial-peak-subpolynomial-closed}
Let $I(x)=\sum_{n\ge0}a_nx^n\ne0$ be absolutely monotonic, $f(x)$ a polynomial of degree $d$ with positive leading term $c_d$, and $C,\nu,\alpha,\beta$ be positive numbers. Set $n_{\pm}(x):= \lfloor f(x)(1 \pm Cx^{-\nu}) \rfloor$. Then the following hold.
\begin{itemize}
    \item[(i)] If the summands of $I(x)$ concentrate superpolynomially near $n\approx f(x)$ with window term $Cx^{-\nu}$, then for any subpolynomial sequence $\{b_n\}_{n\ge0}$ and any \(p>0\), as $x\to+\infty$ we have
\[
\frac{1}{I(x)}\sum_{n=0}^{n_-(x)} b_n a_n x^n=o(x^{-p}),\quad\frac{1}{I(x)}\sum_{n=n_+(x)}^{\infty} b_n a_n x^n=o(x^{-p}).
\]
\item[(ii)] If the summands of $I(x)$ concentrate exponentially near $n\approx f(x)$ with window term $Cx^{-\nu}$ and parameters \((\alpha,\beta)\), then for any subpolynomial sequence $\{b_n\}_{n\ge0}$ and any \(\alpha'\in(0,\alpha)\), as $x\to+\infty$ we have
\[
\frac{1}{I(x)}\sum_{n=0}^{n_-(x)} b_n a_n x^n=O\!\big(e^{-\alpha' x^{\beta}}\big),\quad\frac{1}{I(x)}\sum_{n=n_+(x)}^{\infty} b_n a_n x^n=O\!\big(e^{-\alpha' x^{\beta}}\big).
\]
\end{itemize}
\end{proposition}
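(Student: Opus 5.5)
The plan is to bound $|b_n|$ by a polynomial in $n$ using Remark \ref{remark-boundforsubpolynomialseq}, and then to convert powers of $n$ into powers of $x$ on each tail. Fix $p'>0$ and constants $A,B$ with $|b_n|\le A+Bn^{p'}$ for all $n$. The two tails behave differently. On the lower tail $n\le n_-(x)$ we have $n\le f(x)(1-Cx^{-\nu})\le 2c_dx^d$ for $x\gg0$, so $|b_n|$ is uniformly $O(x^{dp'})$. On the upper tail $n$ is unbounded, so a further argument is needed.

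\emph{Lower tail.} Here we get directly
\[
\frac{1}{I(x)}\Bigl|\sum_{n=0}^{n_-(x)} b_na_nx^n\Bigr|\le (A+B(2c_d)^{p'}x^{dp'})\,\frac{1}{I(x)}\sum_{n=0}^{n_-(x)}a_nx^n .
\]
In case (i), apply \eqref{eq-defsuperpolynomialpeak} with exponent $p+dp'$, which gives $o(x^{-p})$. In case (ii), note that $x^{dp'}e^{-\alpha x^\beta}=O(e^{-\alpha'x^\beta})$ for any $\alpha'<\alpha$.

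\emph{Upper tail.} This is the main obstacle. I would handle it with the identity $n a_nx^n = x\frac{d}{dx}(a_nx^n)$, or more robustly with a dyadic or shifted-argument comparison. Set $y=\lambda x$ with $\lambda>1$ a constant close to $1$. Then
\[
a_nx^n=a_ny^n\lambda^{-n}\le \lambda^{-n}I(y),
\]
and $n^{p'}\lambda^{-n/2}$ is bounded. So for $n\ge N(x):=$ (a large multiple of $x^d$), we get $\sum_{n\ge N}n^{p'}a_nx^n\le K I(\lambda x)\sum_{n\ge N}\lambda^{-n/2}\ll I(\lambda x)\lambda^{-N/2}$. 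By Theorem \ref{lem:0}(ii), $\log I(\lambda x)-\log I(x)=\frac{c_d}{d}(\lambda^d-1+o(1))x^d$. Choosing $N(x)=Mx^d$ with $M\log\lambda/2>\frac{c_d}{d}\lambda^d$ makes this piece $O(e^{-cx^d})\,I(x)$. Since $\beta\le d$ (which I expect follows from the exponential bound together with the fact that the window already carries mass; in any case $e^{-cx^d}$ dominates when $\beta\le d$, and I would check this), the piece is negligible in both cases. For the middle range $n_+(x)\le n\le Mx^d$ we have $|b_n|\le A+BM^{p'}x^{dp'}$, and we argue exactly as on the lower tail.

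Combining the three ranges gives (i) and (ii). The delicate point is making sure the far-tail estimate is uniform and compatible with the parameter $\beta$ in (ii). If $\beta>d$ were allowed, I would instead use Lemma \ref{lemma-superpolynomialpeakparameters} and Theorem \ref{lem:0} to show that $\beta\le d$ is forced. For instance, $I(x)\ge a_{n}x^{n}$ for $n$ slightly beyond $n_+$ gives a lower bound for the upper tail relative to $I$ of order $e^{-O(x^{d}\log x)}$ at worst, so I would settle this by taking $N(x)$ larger, such as $x^{d}\log x$, with $\lambda$ depending suitably on $x$.
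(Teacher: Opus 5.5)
Your route is essentially the paper's. You bound $|b_n|$ by $A+Bn^{p'}$ and treat the lower tail and a middle range $n_+(x)\le n\le N(x)$ with the resulting polynomial-in-$x$ bound. You then kill the far tail using $a_nx^n\le\lambda^{-n}I(\lambda x)$ together with Theorem \ref{lem:0}(ii) for $I(\lambda x)/I(x)$; the paper takes $\lambda=2$. Part (i) is complete as you wrote it.

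The one unfinished point is in (ii). With threshold $N(x)=Mx^d$ your far tail is only $O(e^{-cx^d})$, so you need $\beta\le d$, which you leave as an expectation. Neither remedy you propose works:
\begin{itemize}
\item Taking $N(x)=x^d\log x$ only gives a bound $\exp\bigl(O(x^d)-\tfrac{\log\lambda}{2}x^d\log x\bigr)$, which is still not $O(e^{-\alpha'x^\beta})$ if $\beta>d$.
\item A lower bound on the upper tail via a term $a_nx^n$ with $n$ just beyond $n_+(x)$ is not available. Nothing forces $a_n>0$ there: Lemma \ref{lemma-superpolynomialpeakparameters}(iii) only gives infinitely many nonzero coefficients, with no control on their location or size.
\end{itemize}

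Either of two easy fixes closes this.
\begin{itemize}
\item \emph{The paper's fix.} Keep $\lambda$ fixed and raise the far-tail threshold to $x^{Md}$ with $Md>\beta$. The far tail relative to $I(x)$ is then at most $\exp\bigl(O(x^d)-\tfrac{\log\lambda}{2}x^{Md}\bigr)$. On the middle range, $|b_n|\le A+Bx^{Mdp'}$ is still polynomial in $x$, so no constraint on $\beta$ is needed.
\item \emph{Prove $\beta\le d$ via the lower tail.} Pick $n_0$ with $a_{n_0}>0$. Since $d\ge1$ (Lemma \ref{lemma-superpolynomialpeakparameters}(i)), $n_0\le n_-(x)$ for large $x$. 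So the lower-tail ratio is at least $a_{n_0}x^{n_0}/I(x)=\exp\bigl(-(\tfrac{c_d}{d}+o(1))x^d\bigr)$ by Theorem \ref{lem:0}(ii). This is incompatible with $O(e^{-\alpha x^\beta})$ when $\beta>d$. With $\beta\le d$ in hand, take $M$ large enough that $c>\alpha'$, and your version goes through.
\end{itemize}
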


\begin{proof}
For (i), we fix $q>0$. It follows from Remarks \ref{remark-boundforsubpolynomialseq} that $|b_n|\le A_q+B_qn^q$ for all $n$. Therefore,
\[
\frac{|\sum_{n\le n_-(x)}b_n a_n x^n|}{I(x)}\le\left(A_{q}+B_{q}\,(f(x)(1-Cx^{-\nu}))^{q}\right)\frac{\sum_{n\le n_-(x)}a_n x^n}{I(x)}.
\]
This proves the first equality by definition of superpolynomial concentration.
For the second equality, similar argument shows that for any \(p>0\), we have
\(
\frac{\sum_{n_+(x)\le n\le x^{2d}}b_n a_n x^n}{I(x)}=o(x^{-p})\) as \(x\to+\infty\). For \(n>x^{2d}\), we have
\[
|\sum_{n>x^{2d}}b_n a_n x^n |\le \sum_{n>x^{2d}}(A_q+B_q n^q) 2^{-n} a_n (2x)^n\le I(2x)\sum_{n>x^{2d}}(A_q+B_q n^q) 2^{-n}.
\]
As $x\to+\infty$, Theorem \ref{lem:0} (ii) gives $\frac{I(2x)}{I(x)}<\exp{\left(\frac{3}{2} \frac{c_d}{d}(2x)^d-\frac{1}{2}\frac{c_d}{d}x^d\right)}$, and the following Lemma \ref{lem:K/2} gives $ \sum_{n>x^{2d}}(A_q+B_q n^q) 2^{-n}<2^{-x^{2d}/2}$, which imply that
\[
\frac{|\sum_{n>x^{2d}}b_n a_n x^n|}{I(x)}<\exp{\left((2^d \frac{3c_d}{2d}-\frac{c_d}{2d})x^d\right)}\cdot 2^{-x^{2d}/2}.
\]
Consequently, for any $p>0$, we have 
\(
\frac{|\sum_{n>x^{2d}}b_n a_n x^n|}{I(x)}=o(x^{-p})\) as \(x\to+\infty\). This proves (i).

Arguments for (ii) are similar. Note that the range \(n>x^{2d}\) can be replaced by any \(n>x^{Md}\) with some constant \(M>0\), to fit parameters \(\alpha,\beta\).
\end{proof}

\begin{lemma}\label{lem:K/2}
Let \(h(x)\) be a polynomial with positive leading term. Then for \(K\gg 0\), we have
\(\sum_{n>K}h(n) 2^{-n}<2^{-K/2}\).
\end{lemma}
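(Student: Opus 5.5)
Since $h$ is a polynomial with positive leading term, I will bound it from above by a pure exponential with base strictly less than $\sqrt{2}$, and then sum the resulting geometric tail. Fix the base $\rho:=2^{1/4}$. Because $h(n)/\rho^{n}\to0$ as $n\to\infty$, there is $N_0$ such that $|h(n)|\le \rho^{n}=2^{n/4}$ for all $n\ge N_0$. The positivity of the leading term is not really needed here; it only ensures $h(n)>0$ for large $n$, so the sum is a sum of positive terms. For $K\ge N_0$ we then get
\[
\sum_{n>K}h(n)2^{-n}\le\sum_{n>K}2^{-3n/4}=\frac{2^{-3(K+1)/4}}{1-2^{-3/4}}.
\]
This is a convergent geometric series.

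It remains to compare this with $2^{-K/2}$. The ratio is
\[
\frac{2^{-3(K+1)/4}}{(1-2^{-3/4})\,2^{-K/2}}=\frac{2^{-3/4}}{1-2^{-3/4}}\,2^{-K/4},
\]
which tends to $0$ as $K\to\infty$. So it is $<1$ for all $K$ larger than some $N_1$. Taking $K>\max(N_0,N_1)$ gives the strict inequality $\sum_{n>K}h(n)2^{-n}<2^{-K/2}$.

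There is no real obstacle. The only point requiring care is choosing the intermediate base $2^{1/4}$, or any base in $(1,\sqrt2)$, so that a leftover factor $2^{-cK}$ with $c>0$ absorbs both the constant from the geometric sum and the threshold $N_0$. The argument works for non-integer $K$ as well, since the geometric bound only uses $n\ge\lfloor K\rfloor+1>K$.
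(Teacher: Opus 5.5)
Your proof is correct and is essentially the paper's argument: both bound the tail by a convergent geometric series and use that exponential decay beats polynomial growth. The paper uses $c_{n+1}/c_n\to\frac12$ for $c_n=h(n)2^{-n}$ to get the tail bound $4h(K+1)2^{-(K+1)}$, whereas you first bound $|h(n)|\le 2^{n/4}$ term by term; your remark on non-integer $K$ (relevant because the lemma is applied with $K=x^{2d}$) is right once the displayed bound is read as $2^{-3(\lfloor K\rfloor+1)/4}/(1-2^{-3/4})\le 2^{-3K/4}/(1-2^{-3/4})$.
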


\begin{proof}
Let \(c_n := h(n) 2^{-n}\), and then \(\lim_{n\to\infty}\frac{c_{n+1}}{c_n}=\frac{1}{2}\).
Note that $h(x)$ has positive leading term. So there exists \(N>0\) such that for \(n>N\), we have $c_n>0$ and \(\frac{c_{n+1}}{c_n}<\frac{3}{4}\). Now for \(K>N\), we have
\(\sum_{n>K}c_n < c_{K+1}[1+\frac{3}{4}+\left(\frac{3}{4}\right)^2+...] = 4c_{K+1} = 4h(K+1) 2^{-(K+1)}\). Note that \(4h(K+1) 2^{-(K+1)}<2^{-K/2}\) when $K\gg 0$.
This finishes the proof.
\end{proof}

\begin{remark}\label{rmk-expconcentrationimpliesexpbound}
    From Proposition \ref{lem:superpolynomial-peak-subpolynomial-closed} (ii), one can see that ``exponential concentration" implies ``exponentially bounded" \cite[Definition 4.1]{Hugtenburg2024Quantum}.
\end{remark}

\begin{proposition}\label{prop-substitutionofg(n)tog(f(x))}
Let $I(x)=\sum_{n\ge0}a_nx^n\ne0$ be absolutely monotonic, and $f(x)$ a polynomial with positive leading term, such that the summands of $I(x)$ concentrate superpolynomially near $n\approx f(x)$ with window term $Cx^{-\nu}$. For $N\ge0$, let $g(y)$ be a differentiable function on $[N,+\infty)$ such that for any $p\in\bR_{>0}$, we have $yg'(y)=o(y^p)$ as $y\to+\infty$. Then for any subpolynomial sequence $\{b_n\}_{n\ge0}$, we have
\[
\lim_{x\to+\infty}\frac{\sum_{n\ge N}a_nb_ng(n)x^n-g(f(x))\sum_{n\ge N}a_nb_nx^n}{I(x)}=0.
\]
\end{proposition}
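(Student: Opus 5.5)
We write the numerator as $\sum_{n\ge N}a_nb_n\bigl(g(n)-g(f(x))\bigr)x^n$ and split the range of $n$ into three pieces: the window $n_-(x)<n<n_+(x)$, the lower tail $N\le n\le n_-(x)$, and the upper tail $n\ge n_+(x)$. Here $n_\pm(x)=\lfloor f(x)(1\pm Cx^{-\nu})\rfloor$ as before. By Lemma \ref{lemma-superpolynomialpeakparameters} (i), $f(x)\to+\infty$, so $g(f(x))$ is defined for $x\gg0$.

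\textbf{Window.} For $n$ in the window we use the mean value theorem:
\[
|g(n)-g(f(x))|\le |n-f(x)|\sup_{y\text{ between }n,\,f(x)}|g'(y)|.
\]
We have $|n-f(x)|\le 2Cf(x)x^{-\nu}$, and every such $y$ satisfies $y\asymp f(x)$. Hence
\[
|g'(y)|=o(y^{p-1})=o\bigl(f(x)^{p-1}\bigr)
\]
for every $p>0$. It follows that $|g(n)-g(f(x))|=o\bigl(f(x)^{p}x^{-\nu}\bigr)=o\bigl(x^{dp-\nu}\bigr)$, where $d=\deg f\ge1$. We choose $p$ small so that $dp<\nu$; then the difference is $o(1)$ uniformly on the window. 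Next we handle the factor $b_n$. Subpolynomiality gives $|b_n|\le A_q+B_qn^q$, so on the window $|b_n|=O(x^{dq})$ for any $q>0$. Taking $q$ with $d(p+q)<\nu$, the window contribution is bounded by
\[
o(1)\cdot \sum_{\text{window}}a_nx^n\le o(1)\cdot I(x).
\]

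\textbf{Tails.} The tails are handled by Proposition \ref{lem:superpolynomial-peak-subpolynomial-closed} (i). First, $yg'(y)=o(y^p)$ implies $g(y)=O(1)+o(y^{p})$ for every $p$, by integrating $g'(y)=o(y^{p-1})$ and using $p>0$. Thus $\{b_ng(n)\}_{n\ge N}$ is subpolynomial (we set it to $0$ for $n<N$). The same proposition applied to $\{b_n\}$ therefore gives
\[
\frac{1}{I(x)}\Bigl|\sum_{\text{tails}}a_nb_ng(n)x^n\Bigr|=o(x^{-p})
\quad\text{and}\quad
\frac{1}{I(x)}\Bigl|\sum_{\text{tails}}a_nb_nx^n\Bigr|=o(x^{-p}).
\]
The second estimate is multiplied by $|g(f(x))|=O(1)+o(x^{dp'})$, and with $p>dp'$ the product still tends to $0$. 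Summing the three pieces proves the limit.

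\textbf{Main obstacle.} The delicate point is the window estimate. It needs the growth of $g'$ near $f(x)$ to be beaten by the window width $f(x)x^{-\nu}$ relative to $f(x)$, and also the polynomial growth of $b_n$ to be absorbed. Both work only because the hypotheses are "$o(y^p)$ for every $p>0$", which allows the exponents to be chosen smaller than $\nu/d$. One also has to make sure that $y\asymp f(x)$ uniformly on the window, which follows from $Cx^{-\nu}\to0$.
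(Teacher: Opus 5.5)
Your proposal is correct and follows essentially the same route as the paper: you split the sum into the window and the two tails, you handle the tails by Proposition \ref{lem:superpolynomial-peak-subpolynomial-closed}(i) after noting that $\{b_ng(n)\}$ is subpolynomial, and you treat the window with a mean-value bound on $g'$ combined with $|b_n|\le A_q+B_qn^q$. The only cosmetic difference is that the paper first reduces to $\nu\in(0,1)$ and fixes the exponents $\frac{\nu}{2d}$ and $\frac{\nu}{4d}$, whereas you use the tighter window bound $|n-f(x)|\le 2Cf(x)x^{-\nu}$ and generic exponents with $d(p+q)<\nu$, which makes that reduction unnecessary.
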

\begin{proof}
By Lemma \ref{lemma-superpolynomialpeakparameterchange} (i), we can assume $\nu\in(0,1)$. Set $n_\pm:=\lfloor f(x)(1\pm Cx^{-\nu})\rfloor$. When $x\gg 0$, we have $N<n_-(x)<n_+(x)$, and then
\begin{align*}
&\frac{|\sum_{n \ge N} a_n b_n g(n) x^n-g(f(x)) \sum_{n \ge N} a_n b_n x^n|}{I(x)}\\
\le{}&\frac{\sum_{n_-(x)<n<n_+(x)}a_n|b_n|x^n\cdot|g(n)-g(f(x))|}{I(x)}\\
+\Bigg(&\frac{\sum_{n=\lceil N\rceil}^{n_-(x)}a_n\cdot|b_ng(n)|x^n}{I(x)}+|g(f(x))|\frac{\sum_{n=\lceil N\rceil}^{n_-(x)}a_n\cdot |b_n|x^n}{I(x)}\\
+\;\;\,&\frac{\sum_{n\ge n_+(x)}a_n\cdot|b_ng(n)|x^n}{I(x)}+|g(f(x))|\frac{\sum_{n\ge n_+(x)}a_n\cdot|b_n|x^n}{I(x)}\Bigg)\\
=:{}&M_0(x)+\left(R_1(x)+R_2(x)+R_3(x)+R_4(x)\right).
\end{align*}
Note that the assumptions on $g'(y)$ implies that $\lim_{y\to+\infty}\frac{g(y)}{y^p}=0$ for any $p>0$. So both $\{|b_n|\}$ and $\{|b_ng(n)|\}$ are subpolynomial, and it follows from Proposition \ref{lem:superpolynomial-peak-subpolynomial-closed} that for any $p>0$, $R_i(x)=o(x^{-p})$ as $x\to+\infty$. It remains to show $\lim_{x\to+\infty}M_0(x)=0$.

When $x\gg0$, for $n_-(x)<n<n_+(x)$, we have
\[
|g(n)-g(f(x))|\le\max\{M(x),m(x)\},
\]
where
\begin{align*}
M(x)&:=\max\{|g(n)-g(f(x))|:f(x)\le n\le f(x)(1+Cx^{-\nu})\},\\
m(x)&:=\max\{|g(n)-g(f(x))|:f(x)(1-Cx^{-\nu})-1\le n\le f(x)\}.
\end{align*}
Let $d:=\deg f(x)\ge1$. Note that $yg'(y)=o(y^{\frac{\nu}{2d}})$ as $y\to+\infty$, and thus there exists $N'>N$ and $M>0$ such that $|g'(y)|\le My^{\frac{\nu}{2d}-1}$ for all $y>N'$. So when $x$ is large enough such that $n_-(x)>N'$, we have
\begin{align*}
M(x)&\le \max\{|g'(y)|:f(x)\le y\le f(x)(1+Cx^{-\nu})\}\cdot Cf(x)x^{-\nu}\\
&\le MCf(x)^{\frac{\nu}{2d}-1}f(x)x^{-\nu}=MCf(x)^{\frac{\nu}{2d}}x^{-\nu},\\
m(x)&\le\max\{|g'(y)|:f(x)(1-Cx^{-\nu})-1\le y\le f(x)\}\cdot (Cf(x)x^{-\nu}+1)\\
&\le M(f(x)(1-Cx^{-\nu})-1)^{\frac{\nu}{2d}-1}(Cf(x)x^{-\nu}+1)\\
&= MCf(x)^{\frac{\nu}{2d}}\frac{x^{-\nu}+(Cf(x))^{-1}}{(1-Cx^{-\nu}-f(x)^{-1})^{1-\frac{\nu}{2d}}}.
\end{align*}
From $\nu\in(0,1)$, we see that both $M(x)$ and $m(x)$ are $O(x^{-\frac{\nu}{2}})$ as $x\to+\infty$,  and then
\[
M_0(x)\le\frac{\sum_{n_-(x)<n<n_+(x)}a_n|b_n|x^n}{I(x)}\cdot O(x^{-\frac{\nu}{2}}).
\]
Since $\{b_n\}_{n\ge1}$ is subpolynomial, it follows from Remark \ref{remark-boundforsubpolynomialseq} that, for each $q>0$, there exists positive numbers $A_q,B_q$ such that $|b_n|\le A_q+B_qn^q$ for all $n$, Consequently, as $x\to+\infty$, we get
\begin{align*}
&\frac{\sum_{n_-(x)<n<n_+(x)}a_n|b_n|x^n}{I(x)}\cdot O(x^{-\frac{\nu}{2}})\\
\le &\frac{\sum_{n_-(x)<n<n_+(x)}a_nx^n}{I(x)}(A_q+B_q\cdot n_+(x)^{q})\cdot O(x^{-\frac{\nu}{2}})\\
\le &\left(A_q+B_q\cdot \left(f(x)(1+Cx^{-\nu})\right)^{q}\right)\cdot O(x^{-\frac{\nu}{2}}).
\end{align*}
Choosing $q=\frac{\nu}{4d}$, we see that $\lim_{x\to+\infty}M_0(x)=0$. This proves the proposition.
\end{proof}

\begin{remark}\label{rmk-superpolyconcentrationimpliessuperpolypeak}
    Taking $g(y)=(\log y)^k$ with $k\in\bZ_{\ge0}$ in Proposition \ref{prop-substitutionofg(n)tog(f(x))}, one can see that ``superpolynomial concentration" implies ``superpolynomially peaked" \cite[Definition 1.8]{Hugtenburg2024Quantum}.
\end{remark}

\vspace{1cm}

\section{Exponential concentration for modified hypergeometric series}\label{section: verify exp bound}

In this section, we prove Theorem \ref{thm-applicationmultigamma}, which states that certain modification of hypergeometric series satisfies the exponential concentration property.

\begin{lemma}\label{lem:An-basic-properties}
Let \(\{A_n\}_{n\ge0}\) be a sequence in $\bR_{>0}$ such that \(\frac{A_{n+1}}{A_n}=1+O\!\left(\frac1n\right)\) as \(n\to\infty\). Then the following four statements hold.
\begin{enumerate}
\item[(i)]We have \(\log A_n=O(\log n)\) as \(n\to\infty\).
\item[(ii)] Suppose that \(\{M_n\}_{n\ge1}\) is a sequence in $\bR_{>0}$ such that \(M_n=o(n)\) as $n\to\infty$. Then as $n\to\infty$, we have \(\max_{|m|\le M_n}\left|\log\frac{A_{n+m}}{A_n}\right|=O\!\left(\frac{M_n}{n}\right)\) and \(\max_{|m|\le M_n}\left|\frac{A_{n+m}}{A_n}-1\right|=O\!\left(\frac{M_n}{n}\right)\).
\item[(iii)] For each \(\theta\in(0,1)\), we have \(\lim_{n\to\infty}\max_{|m|\le n^\theta}\left|\frac{A_{n+m}}{A_n}-1\right|=0\).
\item[(iv)] There exists \(M>0\) such that for all integers \(m,n\ge0\),
\[
\left(\frac{m+1}{n+1}\right)^{-M}
\le
\frac{A_m}{A_n}
\le
\left(\frac{m+1}{n+1}\right)^M.
\]
\end{enumerate}
\end{lemma}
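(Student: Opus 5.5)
The whole lemma will follow from one basic estimate: there is a constant \(K>0\) with
\[
\Bigl|\log\frac{A_{k+1}}{A_k}\Bigr|\le \frac{K}{k+1}\qquad\text{for all }k\ge0 .
\]
To get it, note that the hypothesis gives \(\frac{A_{k+1}}{A_k}=1+\epsilon_k\) with \(|\epsilon_k|\le K_0/k\) for \(k\ge k_0\). Enlarging \(k_0\) so that \(|\epsilon_k|\le\frac12\), the inequality \(|\log(1+\epsilon)|\le 2|\epsilon|\) for \(|\epsilon|\le\frac12\) gives the bound for \(k\ge k_0\). The finitely many \(k<k_0\) have \(A_k>0\), so their ratios are bounded and are absorbed by enlarging \(K\). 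Telescoping then yields, for \(m\le n\),
\[
\Bigl|\log\frac{A_n}{A_m}\Bigr|\le K\sum_{k=m}^{n-1}\frac{1}{k+1}\le K\log\frac{n+1}{m+1}+K',
\]
using the comparison \(\sum_{k=m+1}^{n}\frac1k\le\log\frac{n}{m}+\frac{1}{m+1}\) together with the crude bound \(1\) for the terms near \(k=0\).

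For (iv) I would sharpen the telescoping so that no additive constant remains. Comparison with the integral gives \(\sum_{k=m}^{n-1}\frac1{k+1}\le \log\frac{n+1}{m+1}+\frac{1}{m+1}\). If \(n\ge m+1\), then \(\log\frac{n+1}{m+1}\ge\log\frac{m+2}{m+1}\ge \frac{1}{2(m+1)}\), so the error term is at most \(2\log\frac{n+1}{m+1}\). Hence \(|\log(A_n/A_m)|\le 3K|\log\frac{n+1}{m+1}|\), and the case \(m>n\) follows by symmetry. This gives (iv) with \(M=3K\).

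Statement (i) is the special case \(m=0\) of (iv): \(|\log A_n|\le|\log A_0|+M\log(n+1)=O(\log n)\).

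For (ii), fix \(|m|\le M_n\). Since \(M_n=o(n)\), for large \(n\) we have \(n+m\ge n/2\). All indices \(k\) between \(n\) and \(n+m\) then satisfy \(k+1\ge n/2\), so the telescoped sum has at most \(|m|\) terms, each bounded by \(2K/n\). This gives \(|\log(A_{n+m}/A_n)|\le 2KM_n/n\). The bound is uniform in \(|m|\le M_n\) and tends to \(0\), so \(|e^u-1|\le 2|u|\) for small \(|u|\) gives the second estimate.

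For (iii), take \(M_n=n^\theta=o(n)\) in (ii); the bound is then \(O(n^{\theta-1})\to0\).

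The only point needing care is handling small indices (\(k=0\), \(m=0\)) without division by zero. This is why I work with \(k+1\) and \(m+1\) and absorb the finitely many initial ratios into the constant \(K\).
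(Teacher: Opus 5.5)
Your proof is correct and uses the same mechanism as the paper: a uniform per-step bound $|\log(A_{k+1}/A_k)|\le K/(k+1)$, telescoped over at most $M_n$ indices of size at least $n/2$ for (ii). The only reorganization is that you obtain (i) as the case $m=0$ of (iv) instead of telescoping separately from $N_0$. Your handling of (iv) is more careful than the paper's, which asserts $\sum_{k=n}^{m-1}\frac{1}{k+1}\le\log\frac{m+1}{n+1}$; this is false for every $m>n$, since $\sum_{j=n+1}^{m}\frac{1}{j}>\log\frac{m+1}{n+1}$. Your extra step absorbs the $\frac{1}{m+1}$ error via $\log\frac{n+1}{m+1}\ge\frac{1}{2(m+1)}$, at the cost of taking $M=3K$, and that is what makes the argument valid.
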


\begin{proof}
Note that there exists \(C\in\bR_{>0}\) and \(N_0\in\bZ_{>0}\) such that for any $n\ge N_0$, we have \(
\left|\frac{A_{n+1}}{A_n}-1\right|\le \frac{C}{n}\le\frac{1}{2}
\), and then, using $|\log(1+x)|\le2|x|$ for $x>-\frac{1}{2}$, we get $\left|\log\frac{A_{n+1}}{A_n}\right|\le \frac{2C}{n}$.

For (i), when $n\ge N_0$, we have
\[
|\log A_n|
\le
|\log A_{N_0}|+\sum_{k=N_0}^{n-1}\left|\log\frac{A_{k+1}}{A_k}\right|\le|\log A_{N_0}|+2C\sum_{k=N_0}^{n-1}\frac{1}{k}.
\]
Noting $\sum_{k=1}^n\frac{1}{k}=O(\log n)$  as  $n\to\infty$, this proves (i).

For (ii), note that there exists $N_1\in\bZ_{>0}$ such that, for any $n\ge N_1$, we have $M_n\le\frac{n}{2}$. So when $n\ge N_1$, if $m\in\bZ$ satisfies $|m|\le M_n$, then $m+n>0$, implying that $\log\frac{A_{n+m}}{A_n}$ is well-defined. Now suppose that $n\ge\max\{2N_0,N_1\}$. Let $m\in\bZ$ satisfy $|m|\le M_n$. If $m\ge0$, then
\[
\left|\log\frac{A_{n+m}}{A_n}\right|
\le\sum_{k=n}^{n+m-1}\left|\log\frac{A_{k+1}}{A_k}\right|\le 2C\sum_{k=n}^{n+m-1}\frac1k\le 2C\frac{m}{n}\le 2C\frac{M_n}{n};
\]
if \(m=-\ell<0\), then \(0\le \ell\le M_n\le\frac{n}{2}\), implying $n-\ell\ge\frac{n}{2}\ge N_0$, and as a consequence,
\[
\left|\log\frac{A_{n-\ell}}{A_n}\right|
\le
\sum_{k=n-\ell}^{n-1}\left|\log\frac{A_{k+1}}{A_k}\right|\le 2C\sum_{k=n-\ell}^{n-1}\frac1k\le2C\frac{\ell}{n-\ell}\le2C\frac{M_n}{\frac{n}{2}}.
\]
So we obtain \(\max_{|m|\le M_n}\left|\log\frac{A_{n+m}}{A_n}\right|\le 4C\frac{M_n}{n}\), implying the first equality. Noting $|\log(1+x)|\ge\frac{|x|}{2}$ for $x\in(-1,1)$, the second equality is a consequence of the first one. This proves (ii).

For (iii), the required equality follows from (ii) by letting $M_n:=n^\theta$. This proves (iii).

For (iv), first note that there exists $C'>C$ such that \(\left|\log\frac{A_{k+1}}{A_k}\right|\le \frac{C'}{k+1}\) for all $k\in\bZ_{\ge0}$. We set $M:=C'$. For \(m\ge n\ge0\), we have
\[
\left|\log\frac{A_m}{A_n}\right|
\le
\sum_{k=n}^{m-1}\left|\log\frac{A_{k+1}}{A_k}\right|
\le
M\sum_{k=n}^{m-1}\frac1{k+1}
\le M\sum_{k=n}^{m-1}\int_{k}^{k+1}\frac{\mathrm{d}x}{x}\le
M\log\frac{m+1}{n+1},
\]
implying the required inequalities. The case \(n>m\ge 0\) is similar. This proves (iv).
\end{proof}

\begin{theorem}\label{thm:package-Pfw-exp-peak}
Suppose that \(f(x)\) is a polynomial of degree \(d\ge 1\) with positive leading coefficient $c_d x^d$. Let \(\{W_n(x)\}_{n\ge0}\) be a sequence of nonnegative functions on some interval \((X_0,\infty)\), and let \(\{A_n\}_{n\ge0}\) be a sequence in $\bR_{>0}$ such that
\(\frac{A_{n+1}}{A_n}=1+O\!\left(\frac1n\right)\) as \(n\to\infty\).
Fix \(\nu\in(0,d), C\in\bR_{>0}\), and set
\[
N(x):=\lfloor f(x)\rfloor,\qquad
n_\pm(x):=\bigl\lfloor f(x)(1\pm Cx^{-\nu})\bigr\rfloor .
\]
Assume that there exist positive constants \(c_0,D,K\) and \(q\in(0,1),\gamma\in(0,d-\nu)\), such that, when $x$ is large enough, the following conditions (i) and (ii) hold:

\smallskip
\noindent
(i) for every \(0\le i\le N(x)\) and \(0\le j\le D N(x)\),
\[
W_{N(x)-i}(x)\le KW_{N(x)}(x)\exp\!\left(-c_0\frac{i^2}{x^{2\gamma}}\right),
\;
W_{N(x)+j}(x)\le KW_{N(x)}(x)\exp\!\left(-c_0\frac{j^2}{x^{2\gamma}}\right);
\]
\smallskip
\noindent
(ii) for every \(j\ge \lfloor D N(x)\rfloor\),
\[
W_{N(x)+j+1}(x)\le q\cdot W_{N(x)+j}(x).
\]

\noindent Define \(H(x):=\sum_{n=0}^\infty A_n W_n(x)\). Then for any \(\alpha\in(0,c_0C^2c_d^2)\), as $x\to+\infty$,
\[
\frac{\sum_{n=0}^{n_-(x)}A_n W_n(x)}{ H(x)}
=
O\!\bigl(e^{-\alpha x^{2(d-\nu-\gamma)}}\bigr),
\quad
\frac{\sum_{n=n_+(x)}^\infty A_nW_n(x)}{ H(x)}
=
O\!\bigl(e^{-\alpha x^{2(d-\nu-\gamma)}}\bigr).
\]
\end{theorem}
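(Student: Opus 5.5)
The plan is to bound the numerator tails from above and $H(x)$ from below, both relative to the single quantity $A_{N(x)}W_{N(x)}(x)$, and then take the quotient.

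\emph{Lower bound for $H$.} Since all terms are nonnegative, $H(x)\ge A_{N(x)}W_{N(x)}(x)$. This crude bound should suffice, because the tails will carry Gaussian factors of size $e^{-c x^{2(d-\nu-\gamma)}}$, which beat every polynomial loss.

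\emph{Left tail, $n\le n_-(x)$.} Write $n=N(x)-i$. Then
\[
i\ge N(x)-n_-(x)\ge Cf(x)x^{-\nu}-1\ge (1-\epsilon)Cc_dx^{d-\nu}
\]
for $x$ large. Condition (i) gives
\[
W_n(x)\le KW_{N(x)}(x)\exp\bigl(-c_0(1-\epsilon)^2C^2c_d^2x^{2(d-\nu-\gamma)}\bigr).
\]
By Lemma \ref{lem:An-basic-properties}(iv), $A_n/A_{N(x)}\le (N(x)+1)^M$, which is polynomial in $x$. There are at most $N(x)+1=O(x^d)$ such terms. So the left sum divided by $A_{N(x)}W_{N(x)}$ is at most
\[
O(x^{d(M+1)})\exp\bigl(-c_0(1-\epsilon)^2C^2c_d^2x^{2(d-\nu-\gamma)}\bigr).
\]
The exponent $2(d-\nu-\gamma)$ is positive because $\gamma<d-\nu$. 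Hence for any $\alpha<c_0C^2c_d^2$ we may choose $\epsilon$ small and absorb the polynomial factor.

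\emph{Right tail, split into two ranges.}
\begin{itemize}
\item For $n_+(x)\le n\le N(x)+DN(x)$, write $n=N(x)+j$ with $j\ge n_+(x)-N(x)\ge Cf(x)x^{-\nu}-1$. The argument is identical to the left tail, using the second inequality in (i) and $A_n/A_{N(x)}\le ((1+D)N(x)+1)^M$, with $O(x^d)$ terms.
\item For $n>N(x)+DN(x)$, this range may be infinite and needs condition (ii). Set $J=\lfloor DN(x)\rfloor$. Iterating (ii) gives $W_{N+J+k}\le q^kW_{N+J}$ for $k\ge0$. Condition (i) at $j=J$ (allowed since $J\le DN$) gives
\[
W_{N+J}\le KW_N\exp\bigl(-c_0J^2x^{-2\gamma}\bigr).
\]
Here $J^2x^{-2\gamma}\asymp x^{2d-2\gamma}$, which dominates $x^{2(d-\nu-\gamma)}$. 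For the weights, Lemma \ref{lem:An-basic-properties}(iv) gives
\[
\frac{A_{N+J+k}}{A_N}\le\Bigl(\frac{N+J+k+1}{N+1}\Bigr)^M\le (2+D)^M(1+k)^M.
\]
So the sum is at most $K(2+D)^M\,e^{-c_0J^2x^{-2\gamma}}\sum_k (1+k)^Mq^k$, and the series converges since $q<1$.
\end{itemize}

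Combining the three tail estimates and dividing by the lower bound for $H$ gives both claimed $O(e^{-\alpha x^{2(d-\nu-\gamma)}})$ bounds.

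The only delicate point is the infinite right tail. A polynomial-in-$n$ bound on $A_n$ alone cannot be summed unless it is paired with the geometric decay from (ii). This is why the ratio bound from Lemma \ref{lem:An-basic-properties}(iv) is used in the form $(1+k)^M$, which keeps the series summable. The remaining work is bookkeeping of floors, showing $n_\pm$ versus $N$ differ by at least $(1-\epsilon)Cc_dx^{d-\nu}$, which holds since $\nu<d$.
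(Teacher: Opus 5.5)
Your proof is correct and follows essentially the same route as the paper. Both use the lower bound $H(x)\ge A_{N(x)}W_{N(x)}(x)$, condition (i) together with the polynomial control of $A_n/A_{N(x)}$ from Lemma \ref{lem:An-basic-properties}(iv) on $0\le n\le (1+D)N(x)$, and condition (ii) with a geometric series for the infinite tail. The one difference is in that tail: the paper uses $A_{n+1}/A_n\le q_1/q$ for large $n$ to make $A_nW_n$ itself decay with ratio $q_1<1$, whereas you pair the bound $(2+D)^M(1+k)^M$ with $q^k$, and both versions work.
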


\begin{proof}
Write \(L(x):=Cf(x)x^{-\nu}\). Then \(L(x)\asymp x^{d-\nu}\) as $x\to+\infty$. For all large \(x\), we have
\[
n\le n_-(x)\ \text{or}\ n\ge n_+(x)\quad\Longrightarrow\quad |n-N(x)|\ge \tfrac12L(x).
\]
By Lemma \ref{lem:An-basic-properties}(iv), there exists $M>0$ such that
\[
A_n\le A_{N(x)}\max\left\{\left(\frac{n+1}{N(x)+1}\right)^M,\left(\frac{N(x)+1}{n+1}\right)^M\right\},
\quad
\forall n\ge0.
\]
Hence there exists $D'>0$ such that, when $x$ is large enough, we have
\[
0\le n\le (1+D)N(x)\Longrightarrow A_n<D'\cdot A_{N(x)} x^{Md}.
\]
Note that as $x\to+\infty$, \(|n-N(x)|\ge \frac{L(x)}{2}\) implies \(\frac{(n-N(x))^2}{x^{2\gamma}}\ge \frac{L(x)^2}{4x^{2\gamma}}\asymp x^{2(d-\nu-\gamma)}\). Consequently, fix \(\varepsilon\in(0,\frac{c_0}{2})\), and then for large $x$, we have
\begin{equation}\label{eq:weighted-absorb}
0\le n\le (1+D)N(x),\ |n-N(x)|\ge \frac{L(x)}{2}\Longrightarrow A_n\le A_{N(x)}\exp\!\left(\varepsilon\frac{(n-N(x))^2}{x^{2\gamma}}\right).
\end{equation}
For any $S\in(0,1)$, when $x$ is large enough, we have $N(x)+\lfloor DN(x)\rfloor\ge n_+(x)$, $N(x)-n_-(x)\ge SL(x)$ and $n_+(x)-N(x)\ge SL(x)$. Therefore, using condition (i), we have
\begin{align*}
&\sum_{n=0}^{n_-(x)}A_n W_n(x)
+
\sum_{n=n_+(x)}^{N(x)+\lfloor DN(x)\rfloor}A_n W_n(x)\\
\le{}&
KA_{N(x)}W_{N(x)}
\sum_{\substack{0\le n\le N(x)+\lfloor DN(x)\rfloor\\ |n-N(x)|\ge SL(x)}}
\exp\!\left(-(c_0-\varepsilon)\frac{(n-N(x))^2}{x^{2\gamma}}\right)\\
\le{}&
KA_{N(x)}W_{N(x)}\cdot(N(x)+\lfloor DN(x)\rfloor)
\exp\!\left(-(c_0-\varepsilon)S^2C^2c_d^2x^{2(d-\nu-\gamma)}\right)\\
=&
O\!\left(A_{N(x)}W_{N(x)} e^{-\alpha_1 x^{2(d-\nu-\gamma)}}\right).
\end{align*}
By choosing $\epsilon$ and $1-S$ small enough, we can take $\alpha_1$ to be any number in $(0,c_0C^2c_d^2)$. Using \(H(x)\ge A_{N(x)}W_{N(x)}\), this proves the first required equality.

Note that \(A_{n+1}/A_n=1+O(1/n)\). So there exists \(q_1\in(q,1)\) such that for all large \(x\) ,
\[
n\ge N(x)+\lfloor DN(x)\rfloor\Longrightarrow\frac{A_{n+1}}{A_n}\le \frac{q_1}{q}
\; \xLongrightarrow{\text{condition (ii)}} A_{n+1}W_{n+1}(x) \le q_1 A_n W_n(x),
\]
and therefore,
\[
\sum_{n\ge N(x)+\lfloor DN(x)\rfloor}A_n W_n(x)
\le A_{N(x)+\lfloor DN(x)\rfloor} W_{N(x)+\lfloor DN(x)\rfloor}(x)\cdot\sum_{i=0}^\infty q_1^i.
\]
As $x\to+\infty$, applying \eqref{eq:weighted-absorb} at \(n=N(x)+\lfloor DN(x)\rfloor\) and using condition (i), we have
\begin{align*}
A_{N(x)+\lfloor DN(x)\rfloor} W_{N(x)+\lfloor DN(x)\rfloor}(x)
\le{}&
KA_{N(x)}W_{N(x)}
\exp\!\left(-(c_0-\varepsilon)\frac{D^2N(x)^2}{4x^{2\gamma}}\right)\\
=&
O\!\left(A_{N(x)}W_{N(x)} e^{-\alpha_2 x^{2(d-\gamma)}}\right)
\end{align*}
for some \(\alpha_2>0\), and as a consequence,
\[
\sum_{n\ge N(x)+\lfloor DN(x)\rfloor}A_n W_n(x)
=
O\!\left(A_{N(x)}W_{N(x)} e^{-\alpha_3 x^{2(d-\nu-\gamma)}}\right), \quad\forall\alpha_3>0.
\]
Using \(H(x)\ge A_{N(x)}W_{N(x)}\), this proves the second required equality. This completes the proof.
\end{proof}

\begin{theorem}\label{thm:exp-peak-and-asymptotic}
Let \(\{A_{n'}\}_{n'\ge0}\) be a sequence in $\bR_{>0}$ such that \(\frac{A_{n'+1}}{A_{n'}}=1+O\!\left(\frac{1}{n'}\right)\) as \(n'\to\infty\). Let \(\kappa,T\) be positive numbers, and define
\[
W_{n'}(x):=\frac{(Tx)^{\kappa n'}}{\Gamma(\kappa n'+1)},
\qquad
H(x):=\sum_{n'=0}^\infty A_{n'} W_{n'}(x).
\]
Fix \(\nu\in(0,\tfrac12)\), and set \(n'_\pm(x):=\Bigl\lfloor \frac{Tx}{\kappa}\bigl(1\pm (Tx)^{-\nu}\bigr)\Bigr\rfloor \).Then there exists \(\alpha>0\) such that the following two statements hold:
\begin{itemize}
\item[(i)] as \(x\to+\infty\), we have
\[
\frac{\sum_{n'=0}^{n'_-(x)}A_{n'} W_{n'}(x)}{H(x)}
=
O\!\bigl(e^{-\alpha x^{1-2\nu}}\bigr),
\qquad
\frac{\sum_{n'=n'_+(x)}^\infty A_{n'} W_{n'}(x)}{H(x)}
=
O\!\bigl(e^{-\alpha x^{1-2\nu}}\bigr);
\]
\item[(ii)] when $\kappa\in\bZ_{>0}$, the summands of $H(x)=\sum_{n\ge0}H_nx^n$ concentrate exponentially near $n\approx Tx$, with window term $T^{-\nu}x^{-\nu}$ and parameters $(\alpha,1-2\nu)$.
\end{itemize}
\end{theorem}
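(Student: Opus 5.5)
Part (i) follows from Theorem \ref{thm:package-Pfw-exp-peak}, applied in the variable $y:=Tx/\kappa$ (equivalently $x$ up to a constant rescaling). The polynomial is $f(x)=\frac{Tx}{\kappa}$, so $d=1$ and $c_d=T/\kappa$. The weights are $W_{n'}(x)=\frac{(Tx)^{\kappa n'}}{\Gamma(\kappa n'+1)}$, and the window term $(Tx)^{-\nu}$ is of the form $C'x^{-\nu}$ with $C'=T^{-\nu}$. We must verify conditions (i) and (ii) there with $\gamma=\frac12$; then $2(d-\nu-\gamma)=1-2\nu>0$ because $\nu<\frac12$, and the constraint $\gamma\in(0,d-\nu)$ also holds.

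Condition (ii) is a ratio estimate. For $n'=N+j$ with $N:=N(x)=\lfloor Tx/\kappa\rfloor$, we have
\[
\frac{W_{n'+1}}{W_{n'}}=(Tx)^{\kappa}\frac{\Gamma(\kappa n'+1)}{\Gamma(\kappa n'+\kappa+1)}\le\Big(\frac{Tx}{\kappa n'+1}\Big)^{\kappa}
\]
(using $\Gamma(z+\kappa)\ge z^\kappa\Gamma(z)$ for $z\ge1$, or a crude log-convexity bound). Once $n'\ge(1+D)N$, this ratio is at most $q:=(1+D)^{-\kappa}\cdot(1+o(1))<1$, so any $D>0$ works.

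Condition (i) is the main obstacle: a uniform Gaussian bound $W_{N\pm i}\le KW_N e^{-c_0 i^2/x}$ for $0\le i\le N$ and $0\le j\le DN$. I would work with $\phi(s):=\kappa s\log(Tx)-\log\Gamma(\kappa s+1)$, a smooth concave function of $s$. We have $\phi'(s)=\kappa\log(Tx)-\kappa\psi(\kappa s+1)$ and $\phi''(s)=-\kappa^2\psi'(\kappa s+1)$. Since $\psi(\kappa N+1)=\log(Tx)+O(1/x)$, we get $\phi'(N)=O(1/x)$. On $s\in[0,(1+D)N]$ we have $\psi'(\kappa s+1)\ge\frac{1}{\kappa s+1}\ge\frac{c}{x}$, so $\phi''\le -2c_0'/x$ there. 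Taylor expansion with this uniform curvature bound gives $\phi(N\pm i)-\phi(N)\le O(i/x)-c_0'i^2/x$. The linear error $O(i/x)$ is absorbed into a constant $K$: it is bounded by $\frac{c_0'}{2}i^2/x+O(1/x)$. Hence (i) holds with $c_0=c_0'/2$.

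Theorem \ref{thm:package-Pfw-exp-peak} then yields (i) for some $\alpha>0$. The bounds are stated with $n'_\pm$, which correspond to $f(x)(1\pm C'x^{-\nu})$ as required.

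For (ii), assume $\kappa\in\bZ_{>0}$. Then $H(x)=\sum_n H_nx^n$ with $H_{\kappa n'}=A_{n'}T^{\kappa n'}/\Gamma(\kappa n'+1)$ and all other coefficients zero. It is absolutely monotonic, since the series converges for all $x$ by Lemma \ref{lem:An-basic-properties}(i). The index $n=\kappa n'$ satisfies $n\le n_-(x):=\lfloor Tx(1-(Tx)^{-\nu})\rfloor$ iff $n'\le n_-(x)/\kappa$, which implies $n'\le n'_-(x)$ since $n'$ is an integer. Likewise $n\ge n_+(x)$ implies $n'\ge n'_+(x)$, up to a harmless floor issue: $\kappa n'\ge\lfloor Tx(1+\cdot)\rfloor$ forces $n'\ge\lceil\cdot\rceil\ge n'_+$ possibly minus one. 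That boundary term I would handle by slightly shrinking the window constant, using Lemma \ref{lemma-superpolynomialpeakparameterchange}-style monotonicity, or simply by noting that the estimate of Theorem \ref{thm:package-Pfw-exp-peak} is robust under replacing $C$ by $C(1-\epsilon)$ at the cost of a smaller $\alpha$. Thus the tail sums in the definition are dominated by those in (i), giving exponential concentration near $n\approx Tx$ with window $T^{-\nu}x^{-\nu}$ and parameters $(\alpha,1-2\nu)$.
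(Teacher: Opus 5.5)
Your proof is correct, and its architecture matches the paper's. You apply Theorem~\ref{thm:package-Pfw-exp-peak} with $f(x)=Tx/\kappa$, $d=1$, $\gamma=\tfrac12$, $C=T^{-\nu}$, and then pass from the index $n'$ to $n=\kappa n'$ (the paper packages this step as Lemma~\ref{lemma-mutiplicityrelation}).

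The real difference is in how you verify the Gaussian bound, condition (i). The paper works with the discrete log-ratio $h_x(u)=\log\bigl(W_{u+1}(x)/W_u(x)\bigr)$. It gets $h_x'(u)\le -c/(Tx)$ only on a middle range $\eta Tx/(3\kappa)\le u\le 3Tx/(2\kappa)$, from the asymptotic $\psi'(y)=y^{-1}+O(y^{-2})$. It then needs a separate monotonicity argument ($W_{n'}$ increases for $n'\le \eta Tx/\kappa$) to treat $i$ close to $N(x)$. You instead use the exact inequality $\psi'(y)>1/y$, valid for all $y>0$. This makes $\phi(s)=\log W_s(x)$ uniformly concave, with curvature of order $1/x$, on the whole interval $[0,(1+D)N]$. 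A single Taylor expansion around $N$, with the $O(i/x)$ linear term absorbed into the constant $K$, then covers every $i\le N$ and $j\le DN$ at once. Your route is shorter and avoids the case split.

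Two small corrections. First, $\Gamma(z+\kappa)\ge z^\kappa\Gamma(z)$ holds for $\kappa\ge1$ but fails for $\kappa\in(0,1)$: Wendel's inequality reverses it, e.g.\ $z=1$, $\kappa=\tfrac12$ gives $\Gamma(\tfrac32)<1$. In part (i), where $\kappa$ is an arbitrary positive real, you genuinely need your $(1+o(1))$ factor. The log-convexity bound $\Gamma(z+\kappa)\ge (z-1)^\kappa\Gamma(z)$ supplies it.

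Second, the off-by-one you worry about in (ii) does not occur. Since $\kappa\lfloor y\rfloor$ is an integer not exceeding $\kappa y$, we have $\lfloor\kappa y\rfloor\ge\kappa\lfloor y\rfloor$. Hence $\kappa n'\ge\lfloor\kappa y\rfloor$ already forces $n'\ge\lfloor y\rfloor=n'_+(x)$. Your fallback of shrinking the window constant (and $\alpha$) would also work, but it is unnecessary.
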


\begin{proof}
For (i), we apply Theorem \ref{thm:package-Pfw-exp-peak} with
\[
f(x)=\frac{Tx}{\kappa},\quad C=T^{-\nu},\quad d=1,\quad \gamma=1/2.
\]
This force $\nu$ to be in $(0,\frac{1}{2})$, since $\gamma<d-\nu$. It remains to verify hypotheses in Theorem \ref{thm:package-Pfw-exp-peak} for \(W_{n'}(x)=\frac{(Tx)^{\kappa n'}}{\Gamma(\kappa n'+1)}\). For $x>0$ and $u\ge0$, set
\[
R_u(x):=(Tx)^\kappa\frac{\Gamma(\kappa u+1)}{\Gamma(\kappa u+\kappa+1)},
\quad
h_x(u):=\log R_u(x).
\]
By the standard gamma-ratio asymptotic, as $u\to+\infty$, we have 
\begin{equation}\label{eq:gamma-ratio}
\frac{\Gamma(u+\kappa)}{\Gamma(u)}
=
u^\kappa\bigl(1+O(u^{-1})\bigr),
\end{equation}
implying
\[
R_u(x)=\left(\frac{Tx}{\kappa u}\right)^\kappa \left(1+O\!\left(\frac1u\right)\right),\quad h_x(u)=\kappa\log\frac{Tx}{\kappa u}+O\!\left(\frac1u\right),
\]
Set $N(x):=\Bigl\lfloor \frac{Tx}{\kappa}\Bigr\rfloor$.
Choose \(\eta\in(0,1)\) such that \(R_u(x)\ge \left(\eta\frac{Tx}{\kappa u}\right)^\kappa\) for all \(u\ge 1\) and $x>0$. As a consequence,
\begin{align}\label{eq-Wincreasewrtn}
1\le n'\le \eta\frac{Tx}{\kappa}\Rightarrow\frac{W_{n'+1}(x)}{W_{n'}(x)} = R_{n'}(x)>1.
\end{align}
Let $\psi(y):=\frac{\Gamma'(y)}{\Gamma(y)}$. We have \(h_x'(u)=\kappa\psi(\kappa u+1)-\kappa\psi(\kappa u+\kappa+1).\) Since \(\psi'(y)=y^{-1}+O(y^{-2})\) as $y\to+\infty$, it follows that there exist \(c>0\) such that for \(x\) large, when $\frac{\eta Tx}{3\kappa}\le u\le\frac{3Tx}{2\kappa}$, we have
\(
h_x'(u)\le -\frac{c}{Tx}.
\)
Hence for large \(x\), when $0\le j\le \frac{Tx}{2\kappa}$,  we have
\begin{align*}
\log\frac{W_{N(x)+j}(x)}{W_{N(x)}(x)}
=&
\sum_{m=0}^{j-1}h_x\!\bigl(N(x)+m\bigr)
\le
-\frac{c\,j(j-1)}{2Tx}+jh_x\!\bigl(N(x)\bigr)\\
=&
-\frac{c\,j^2}{2Tx}+j\left(h_x\!\bigl(N(x)\bigr)+\frac{c}{2Tx}\right)
\end{align*}
and when $0\le i\le (1-\frac{\eta}{2})\frac{Tx}{\kappa}$, we have
\[
\log\frac{W_{N(x)-i}(x)}{W_{N(x)}(x)}
\le
-\frac{c\,i(i+1)}{2Tx}-ih_x\!\bigl(N(x)\bigr)
\le
-\frac{c\,i^2}{2Tx}-ih_x\!\bigl(N(x)\bigr).
\]
When $x$ is large and \(\left(1-\frac{\eta}{2}\right)\frac{Tx}{\kappa}\le i\le N(x)\), set \(i_0:=\lfloor \left(1-\frac{\eta}{2}\right)\frac{Tx}{\kappa}\rfloor\), and from \((1-\eta)\,i\le i_0\le i\) and \eqref{eq-Wincreasewrtn}, we obtain
\[
\log\frac{W_{N(x)-i}(x)}{W_{N(x)}(x)}
\le
\log\frac{W_{N(x)-i_0}(x)}{W_{N(x)}(x)}
\le
-c\,\frac{i_0^2}{2Tx}-i_0h_x\!\bigl(N(x)\bigr)
\le
-(1-\eta)^2\cdot c\,\frac{i^2}{2Tx}-i_0h_x\!\bigl(N(x)\bigr).
\]
Note that the terms \(O\!\left(\frac1u\right)\) in $h_x(u)$ are independent of \(x\). Therefore we get
\[
h_x\!\bigl(N(x)\bigr)=O\!\left(\frac1x\right),\quad x\to+\infty,
\]
and therefore
\[
N(x)\cdot \left|h_x\!\bigl(N(x)\bigr)\right|<\log K \text{ and } N(x)\cdot \left|h_x\!\bigl(N(x)\bigr)+\frac{c}{2Tx}\right|<\log K 
\]
for some $K>0$ and all large $x$.
Finally, as $x\to+\infty$, if \(j\ge \frac{Tx}{2\kappa}\), then \(N(x)+j\ge \frac{3Tx}{2\kappa}+O(1)\), and we have
\[
h_x\!\bigl(N(x)+j\bigr)
=
\kappa\log\frac{Tx}{\kappa(N(x)+j)}+O\!\left(\frac1x\right)
\le
-\frac{\kappa}{2}\log\frac32.
\]
Therefore there exists \(q\in(0,1)\) such that when $j\ge \frac{Tx}{2\kappa}$, we have
\[
\frac{W_{N(x)+j+1}(x)}{W_{N(x)+j}(x)}
=
\exp\!\bigl(h_x(N(x)+j)\bigr)
\le q.
\]
Thus the hypotheses of Theorem \ref{thm:package-Pfw-exp-peak} are satisfied. This proves (i).

For (ii), note that $H_n=0$ when $\kappa\nmid n$ and $H_{\kappa n'}=\frac{A_{n'}T^{\kappa n'}}{(\kappa n')!}$.  Then (i) gives, as $x\to+\infty$
\[
\frac{\sum_{n'=0}^{\lfloor\frac{Tx}{\kappa}(1-(Tx)^{-\nu})\rfloor}H_{\kappa n'}x^{\kappa n'}}{H(x)}=O(e^{-\alpha x^{1-2\nu}}),\quad\frac{\sum_{n'\ge\lfloor\frac{Tx}{\kappa}(1+(Tx)^{-\nu})\rfloor}H_{\kappa n'}x^{\kappa n'}}{H(x)}=O(e^{-\alpha x^{1-2\nu}}).
\]
The proof is finished by the following Lemma \ref{lemma-mutiplicityrelation}.
\end{proof}

\begin{lemma}\label{lemma-mutiplicityrelation}
Let $H(x)=\sum_{n\ge0}H_nx^n$ be absolutely monotonic, and $\kappa\in\bZ_{>0}$. Suppose that $H_n=0$ when $\kappa\nmid n$. Assume that there exists positive numbers $C_1,C_2,\nu,\alpha,\beta$, such that, as $x\to+\infty$,
\[\frac{\sum_{n'=0}^{\lfloor C_1 x(1-C_2x^{-\nu})\rfloor}H_{\kappa n'}x^{\kappa n'}}{H(x)}=O\left(e^{-\alpha x^{\beta}}\right),\quad\frac{\sum_{n'\ge\lfloor C_1 x(1+C_2x^{-\nu})\rfloor}H_{\kappa n'}x^{\kappa n'}}{H(x)}=O\left(e^{-\alpha x^{\beta}}\right).\]
Then the summands of $H(x)=\sum_{n\ge0}H_nx^n$ concentrate exponentially near $n\approx \kappa C_1x$ with window term $C_2x^{-\nu}$ and parameters $(\alpha,\beta)$.
\end{lemma}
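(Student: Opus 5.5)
This is essentially a reindexing argument. Take $f(x):=\kappa C_1x$, window term $C_2x^{-\nu}$, and
\[
n_\pm(x)=\lfloor \kappa C_1x(1\pm C_2x^{-\nu})\rfloor .
\]
Since $H_n=0$ unless $\kappa\mid n$, we may write
\[
\sum_{n=0}^{n_-(x)}H_nx^n=\sum_{n'\,:\,\kappa n'\le n_-(x)}H_{\kappa n'}x^{\kappa n'},
\qquad
\sum_{n\ge n_+(x)}H_nx^n=\sum_{n'\,:\,\kappa n'\ge n_+(x)}H_{\kappa n'}x^{\kappa n'} .
\]
The plan is to compare the index ranges on the right with the ranges $n'\le \lfloor C_1x(1-C_2x^{-\nu})\rfloor$ and $n'\ge \lfloor C_1x(1+C_2x^{-\nu})\rfloor$ from the hypothesis. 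Because all terms are non-negative, it is enough to show that the new ranges lie inside the hypothesised ones, at least for $x$ large.

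For the lower sum, set $y:=C_1x(1-C_2x^{-\nu})$. If $\kappa n'\le \lfloor \kappa y\rfloor\le \kappa y$, then $n'\le y$, and since $n'$ is an integer, $n'\le\lfloor y\rfloor$. The lower sum is therefore bounded by the hypothesised lower sum, which gives the first estimate in \eqref{eq:exponentially-bounded} directly.

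The upper sum is the only delicate point, because floors interact badly with multiplication by $\kappa$. Set $z:=C_1x(1+C_2x^{-\nu})$ and suppose $\kappa n'\ge\lfloor \kappa z\rfloor$. This gives $\kappa n'>\kappa z-1$, so $n'>z-1/\kappa\ge z-1$. Since $n'$ is an integer, it follows that $n'\ge\lceil z-1\rceil$. However, $\lceil z-1\rceil$ may be strictly smaller than $\lfloor z\rfloor$: this happens exactly when $z\notin\bZ$, where $\lceil z-1\rceil=\lfloor z\rfloor$, or when $z$ is an integer, where $\lceil z-1\rceil=z-1$. So at most one extra index $n'_0:=\lfloor z\rfloor-1$ can appear.

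There are two ways to handle this extra index. The first is to absorb it directly. The term $H_{\kappa n'_0}x^{\kappa n'_0}$ satisfies $n'_0\le z$, but it is not necessarily covered by the lower hypothesis. The second, cleaner option is a sharper integrality argument: $n'$ is an integer with $n'>z-1/\kappa$. If $\lfloor z\rfloor\le z-1/\kappa$, that is, if the fractional part of $z$ is at least $1/\kappa$, then $n'\ge\lfloor z\rfloor$ and we are done. Otherwise the single term with $n'=\lfloor z\rfloor-1$ must be excluded.

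I expect this to be the main obstacle. The fix I would try first is to avoid needing the exact window, by exploiting slack. Because of Lemma \ref{lemma-superpolynomialpeakparameterchange}(i)-type monotonicity, it is enough to run the comparison with a slightly smaller hypothesised threshold. Concretely, for $x$ large we have $\kappa C_1x(1+C_2x^{-\nu})-1\ge \kappa C_1x(1+C_2'x^{-\nu})$ for any $C_2'<C_2$. So if the hypothesis is available for some $C_2'<C_2$, the inclusion holds with margin. If only the exact constant is available, I would instead bound the one boundary term by the tail that contains it.

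Finally, dividing by $H(x)$ gives both estimates in \eqref{eq:exponentially-bounded} with the same parameters $(\alpha,\beta)$, and the lemma follows.
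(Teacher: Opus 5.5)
Your lower-sum argument is correct and coincides with the paper's. The upper-sum step, however, contains a rounding error and is not completed as written. From the strict inequality $n'>z-1$ with $n'\in\bZ$ you infer $n'\ge\lceil z-1\rceil$. This discards the strictness: when $z\in\bZ$ it allows $n'=z-1$, which violates $n'>z-1$. The correct consequence is $n'\ge\lfloor z-1\rfloor+1=\lfloor z\rfloor$, so the ``extra index'' $n'_0=\lfloor z\rfloor-1$ never occurs. Your own sharper bound gives the same conclusion, since $n'>z-1/\kappa\ge\lfloor z\rfloor-1/\kappa\ge\lfloor z\rfloor-1$.

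Because you did not see this, you rely on remedies that do not work. The slack argument needs the tail hypothesis for some $C_2'<C_2$. That is strictly stronger than what is assumed, because the corresponding tail contains more terms. Lemma~\ref{lemma-superpolynomialpeakparameterchange}(i) only lets you widen a window, not narrow it. Moreover, your inequality $\kappa C_1x(1+C_2x^{-\nu})-1\ge\kappa C_1x(1+C_2'x^{-\nu})$ fails for large $x$ when $\nu>1$. Nor can the term with $n'=\lfloor z\rfloor-1$ be bounded by ``the tail that contains it''. The hypothesised upper tail starts at $\lfloor z\rfloor$, and for $\nu<1$ and large $x$ the hypothesised lower range ends at $\lfloor y\rfloor<\lfloor z\rfloor-1$. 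So nothing in the hypotheses makes this single term $O(e^{-\alpha x^{\beta}}H(x))$.

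The missing idea is a one-line integrality observation, which is exactly the paper's argument. The number $\kappa\lfloor z\rfloor$ is an integer not exceeding $\kappa z$, hence $\kappa\lfloor z\rfloor\le\lfloor\kappa z\rfloor=n_+(x)$. Therefore every multiple $\kappa n'\ge n_+(x)$ satisfies $n'\ge\lfloor z\rfloor$, which gives $\sum_{n\ge n_+(x)}H_nx^n\le\sum_{n'\ge\lfloor z\rfloor}H_{\kappa n'}x^{\kappa n'}$. The second estimate then follows directly from the hypothesis with the same $(\alpha,\beta)$. No slack in $C_2$ and no restriction on $\nu$ is needed.
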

\begin{proof}
     We need to show that, as $x\to+\infty$,
    \[
    \frac{\sum_{n=0}^{\lfloor \kappa C_1x(1-C_2x^{-\nu})\rfloor}H_nx^n}{H(x)}=O\left(e^{-\alpha x^{\beta}}\right),\quad \frac{\sum_{n\ge\lfloor\kappa C_1 x(1+C_2x^{-\nu})\rfloor}H_{n}x^{n}}{H(x)}=O\left(e^{-\alpha x^{\beta}}\right).
    \]
    For the first equality, it suffices to show that, when $x>>0$, \[\sum_{n'=0}^{\lfloor C_1x(1-C_2x^{-\nu})\rfloor}H_{\kappa n'}x^{\kappa n'}\ge\sum_{n=0}^{\lfloor\kappa C_1x(1-C_2x^{-\nu})\rfloor}H_{n}x^{n}.\]To this end, using $H_n=0$ when $\kappa\nmid n$, we only need to show that,  when $x>>0$, \[
    \lfloor\kappa C_1x(1-C_2x^{-\nu})\rfloor<\kappa\lfloor C_1x(1-C_2x^{-\nu})\rfloor+\kappa,\]The inequality follows from \(C_1x(1-C_2x^{-\nu})<\lfloor C_1x(1-C_2x^{-\nu})\rfloor+1\). This proves the first equality.

For the second equality, it sufficies to show that, when $x>>0$, 
\[\sum_{n'\ge\lfloor C_1x(1+C_2x^{-\nu})\rfloor}H_{\kappa n'}x^{\kappa n'}\ge\sum_{n\ge\lfloor\kappa C_1x(1+C_2x^{-\nu})\rfloor}H_{n}x^{n}.\]To this end, using $H_n=0$ when $\kappa\nmid n$, we only need to show that,  
\[\lfloor\kappa C_1x(1+C_2x^{-\nu})\rfloor\ge\kappa\lfloor C_1x(1+C_2x^{-\nu})\rfloor.\]
This inequality follows from $C_1x(1+C_2x^{-\nu})\ge\lfloor C_1x(1+C_2x^{-\nu})\rfloor$. This proves the lemma.
\end{proof}

\begin{theorem}\label{thm-applicationmultigamma}
Let \(p\ge0\), \(q>0\) be integers, and let
\[
\alpha_1,\dots,\alpha_p,\beta_1,\dots,\beta_q,
\quad
a_1,\dots,a_p,b_1,\dots,b_q,
\quad
T\]
be positive numbers such that \(\kappa:=\sum_{s=1}^q\beta_s-\sum_{r=1}^p\alpha_r\in\bZ_{>0}\). Define \(C:=\prod_{r=1}^p\alpha_r^{\alpha_r}\prod_{s=1}^q\beta_s^{-\beta_s}
\).
Let \(\{a_{n'}\}_{n'\ge0}\) be a sequence in $\bR_{>0}$ such that \(\frac{a_{n'+1}}{a_{n'}}=1+O\!\left(\frac{1}{n'}\right)\) as \(n'\to\infty\), and set
\begin{equation}\label{eq:GX-general-series}
H(x):=
\sum_{n'=0}^\infty
a_{n'}
\frac{\prod_{r=1}^p\Gamma(\alpha_r n'+a_r)}
     {\prod_{s=1}^q\Gamma(\beta_s n'+b_s)}
\,(Tx)^{\kappa n'}.
\end{equation}
Then for any $\nu\in(0,\frac{1}{2})$,
the summands of \(H(x)=\sum_{n\ge0}H_nx^n\) concentrate exponentially near \(n\approx \kappa C^{\frac{1}{\kappa}}Tx\), with window terms \((\kappa C^{\frac{1}{\kappa}}T)^{-\nu}x^{-\nu}\), and parameters $(\alpha,1-2\nu)$ for some $\alpha>0$.
\end{theorem}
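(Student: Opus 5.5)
The plan is to reduce to Theorem \ref{thm:exp-peak-and-asymptotic} by rewriting each summand of \eqref{eq:GX-general-series} as $A_{n'}W_{n'}(x)$ with $W_{n'}(x)=\frac{(T'x)^{\kappa n'}}{\Gamma(\kappa n'+1)}$ for a rescaled constant $T':=C^{1/\kappa}T$, and a sequence $\{A_{n'}\}$ satisfying $\frac{A_{n'+1}}{A_{n'}}=1+O(\frac1{n'})$. Concretely, set
\[
A_{n'}:=a_{n'}\,C^{-n'}\,\Gamma(\kappa n'+1)\,\frac{\prod_{r=1}^p\Gamma(\alpha_r n'+a_r)}{\prod_{s=1}^q\Gamma(\beta_s n'+b_s)},
\]
which is positive, and by construction $H(x)=\sum_{n'}A_{n'}W_{n'}(x)$. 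Once the ratio condition on $A_{n'}$ is verified, Theorem \ref{thm:exp-peak-and-asymptotic}(ii) (applicable since $\kappa\in\bZ_{>0}$) gives exponential concentration near $n\approx T'x=C^{1/\kappa}Tx$ with window term $T'^{-\nu}x^{-\nu}$ and parameters $(\alpha,1-2\nu)$.

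The key step is the ratio estimate. Since $a_{n'+1}/a_{n'}=1+O(1/n')$ by hypothesis, it suffices to show that
\[
\frac{B_{n'+1}}{B_{n'}}=1+O\!\left(\frac1{n'}\right),\qquad B_{n'}:=C^{-n'}\Gamma(\kappa n'+1)\frac{\prod_r\Gamma(\alpha_r n'+a_r)}{\prod_s\Gamma(\beta_s n'+b_s)}.
\]
For each gamma factor I would use the gamma-ratio asymptotic \eqref{eq:gamma-ratio} in its general form $\frac{\Gamma(y+\lambda)}{\Gamma(y)}=y^{\lambda}(1+O(y^{-1}))$ for fixed $\lambda>0$ (real, not necessarily integer). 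With $y=\alpha_r n'+a_r$ and $\lambda=\alpha_r$ this gives $\frac{\Gamma(\alpha_r(n'+1)+a_r)}{\Gamma(\alpha_rn'+a_r)}=(\alpha_r n')^{\alpha_r}(1+O(1/n'))$, and similarly for the $\beta_s$ and the $\kappa$ factors. Multiplying, the powers of $n'$ combine to $n'^{\kappa+\sum\alpha_r-\sum\beta_s}=n'^0$, and the constants combine to $\kappa^\kappa\prod\alpha_r^{\alpha_r}\prod\beta_s^{-\beta_s}=\kappa^\kappa C$.

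This exposes a discrepancy: the ratio tends to $\kappa^\kappa$, not $1$. So the correct normalization is $T':=\kappa C^{1/\kappa}T$, i.e. $A_{n'}:=a_{n'}(\kappa^\kappa C)^{-n'}\Gamma(\kappa n'+1)\prod_r\Gamma(\cdots)/\prod_s\Gamma(\cdots)$. With this choice the ratio is $1+O(1/n')$, and Theorem \ref{thm:exp-peak-and-asymptotic}(ii) yields concentration near $n\approx T'x=\kappa C^{1/\kappa}Tx$ with window term $(\kappa C^{1/\kappa}T)^{-\nu}x^{-\nu}$. This is exactly the statement.

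The main obstacle is this bookkeeping of constants, together with making sure the $O(1/y)$ gamma-ratio error holds uniformly for real shifts $\lambda$, which is standard from Stirling's formula. A second point to check is that $n'=0$ and small $n'$ cause no trouble, because the ratio hypothesis in Theorem \ref{thm:exp-peak-and-asymptotic} is only asymptotic. Nothing else is needed, since positivity of all Gamma arguments follows from $a_r,b_s>0$.
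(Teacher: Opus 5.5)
Your proposal is correct and, after you fix the normalization to $T'=\kappa C^{1/\kappa}T$ and $A_{n'}=a_{n'}(\kappa^\kappa C)^{-n'}\Gamma(\kappa n'+1)\prod_r\Gamma(\alpha_rn'+a_r)/\prod_s\Gamma(\beta_sn'+b_s)$, it is exactly the paper's proof: the same factorization $H(x)=\sum_{n'}A_{n'}W_{n'}(x)$, the same gamma-ratio check that $A_{n'+1}/A_{n'}=1+O(1/n')$, and the same appeal to Theorem \ref{thm:exp-peak-and-asymptotic}(ii). In the write-up, drop the first choice $T'=C^{1/\kappa}T$, since with it the ratio tends to $\kappa^\kappa$, which is not $1$ unless $\kappa=1$.
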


\begin{proof}
Note that $H_n=0$ when $\kappa\nmid n$ and $H_{\kappa n'}=a_{n'}
\frac{\prod_{r=1}^p\Gamma(\alpha_r n'+a_r)}
     {\prod_{s=1}^q\Gamma(\beta_s n'+b_s)}T^{\kappa n'}$. We set 
\[
A_{n'}:=a_{n'}
\frac{\prod_{r=1}^p\Gamma(\alpha_r n'+a_r)}{\prod_{s=1}^q\Gamma(\beta_s n'+b_s)}\frac{\Gamma(\kappa n'+1)}{(\kappa^\kappa C)^{n'}},\quad W_{n'}(x):=\frac{(\kappa C^{1\over\kappa}Tx)^{\kappa n'}}{\Gamma(\kappa n'+1)}.
\]
Then \(H(x)=\sum_{n'\ge0}H_{\kappa n'}x^{\kappa n'}=\sum_{n'\ge0}A_{n'} W_{n'}(x)\). Now
\[
\frac{A_{n'+1}}{A_{n'}}
=
\frac{a_{n'+1}}{a_{n'}}\,
\prod_{r=1}^p
\frac{\Gamma(\alpha_r n'+\alpha_r+a_r)}{\Gamma(\alpha_r n'+a_r)}
\prod_{s=1}^q
\frac{\Gamma(\beta_s n'+b_s)}{\Gamma(\beta_s n'+\beta_s+b_s)}\frac{\Gamma(\kappa n'+\kappa+1)}{\Gamma(\kappa n'+1)}(\kappa^\kappa C)^{-1}.
\]
Using the gamma-ratio asymptotic \eqref{eq:gamma-ratio}, as $n'\to\infty$, we have
\[
\frac{A_{n'+1}}{A_{n'}}
=
\frac{a_{n'+1}}{a_{n'}}\,
(\kappa^\kappa C)^{-1}
(\kappa n')^\kappa
\prod_{r=1}^p(\alpha_r n')^{\alpha_r}
\prod_{s=1}^q(\beta_s n')^{-\beta_s}
\left(1+O\!\left(\frac{1}{n'}\right)\right)
=
1+O\!\left(\frac{1}{n'}\right).
\]
Applying Theorem  \ref{thm:exp-peak-and-asymptotic} to \(\sum_{n'\ge0}A_{n'} W_{n'}(x)\), we get the required result.
\end{proof}

\section{Application to quantum periods}

In this section, after a brief review on quantum periods for Fano manifolds, we apply Theorem \ref{thm-applicationmultigamma} to prove Theorem \ref{thm-mainthm}, which is the main result of this article. We switch the variable from $x$ to $t$, in accordance with the standard notation for quantum periods.

For a Fano manifold $X$, let $\mathrm{Eff}(X)\subset H_2(X,\bZ)$ be the semi-group generated by effective curve classes. For $\bfd\in\Eff(X)$, let $\overline{M}_{0,k}(X,\bfd)$ denote the moduli space of $k$-pointed genus-zero  stable maps of class $\bfd$. For non-negative integers $a_1,\dots,a_k$ and cohomology classes $\gamma_1,\dots,\gamma_k\in H^\mathrm{ev}(X)$, the associated descendant invariant is
\[
\left<\tau_{a_1}(\gamma_1),\dots,\tau_{a_k}(\gamma_k)\right>_\bfd^X:=\int_{[\overline{M}_{0,k}(X,\bfd)]^{\mathrm{vir}}}\prod_{i=1}^k\left(c_1(\mathcal{L}_i)^{a_i}\cup\mathrm{ev}_i^*(\gamma_i)\right),
\]
where $\mathcal{L}_i$ is the universal cotangent line bundle at the $i$-th marked point, $\overline{M}_{0,k}(X,\bfd)\xrightarrow{\mathrm{ev}_i}X$ is the evaluation map at the $i$-th marked point, and $[\overline{M}_{0,k}(X,\bfd)]^{\mathrm{vir}}$ is the \emph{virtual fundamental class}.

Let $\{T_i\}$ and $\{T^i\}$ be bases of $H^\ev(X)$ dual to each other w.r.t. Poincar\'e pairing. Givental's $J$-function is defined by
\[
J_X(t):=\rme^{c_1(X)\log t}\left\{\mathbf{1}+\sum_{\bfd\in\mathrm{Eff}(X)}\sum_i\sum_{a\ge0}\left<\tau_a(T_i)\right>_\bfd^X T^it^{c_1(X)\cdot\bfd}\right\},
\]
where $\mathbf{1}\in H^0(X)$ is the identity class. Note that $J_X(t)$ is an $H^\ev(X)$-valued function, and the \emph{quantum period} $G_X(t)$ is the $H^0$-part of $J_X(t)$. We can write $G_X(t)=1+\sum_{n\ge1}G_nt^n$, with 
\[
G_n:=\sum_{\bfd\in\Eff(X):c_1(\bfd)=n}\left<\tau_{n-2}([\mathrm{pt}])\right>_\bfd^X,\quad n\ge2,
\]
and $G_1=0$, where $[\mathrm{pt}]\in H^\mathrm{top}(X)$ is the Poincar\'e dual of the class of a point. The \emph{regularized quantum period} $\widehat{G}_X(t)$ is the power series defined by 
\[
\widehat{G}_X(t):=1+\sum_{n\ge1}n!G_nt^n.
\]
There are some conjectures concerning the sequence $\{G_n\}$. For example, it is expected that $n!G_n\in\bZ_{\ge0}$ for all $n$. This holds for all known numerical examples, and is proved under some conditions (see \cite[Theorem 1.2]{Ma19} and \cite[Corollary 1.3]{Jo25}). Another conjecture concerns its asymptotic behavior. It is known that the regularized quantum period $\widehat{G}_X(t)$ has non-zero radius of convergence. The \emph{$A$-model conifold value} $T_{A,\mathrm{con}}$ of $X$ is the inverse of the radius of convergence of $\widehat{G}_X(t)$ \cite{Galkin2024RevisitingGammaI}, i.e.
\[
T_{A,\mathrm{con}}:=\varlimsup_{n\to\infty}|n!G_n|^{1\over n}.
\]
It is a non-negative real number, and we anticipate that it is always positive. 

Now we  state and prove the main result of this article.
\begin{theorem}\label{thm-mainthm}
Suppose that $X$ is a Fano manifold admitting a convenient weak LG model with non-negative coefficients. Then the summands of $G_X(t)=\sum_{n\ge0}G_nt^n$ concentrate exponentially near $n\approx T_{A,\mathrm{con}}t$, with window term $T^{-\nu}_{A,\mathrm{con}}t^{-\nu}$, and parameters $(\alpha,1-2\nu)$ for any $\nu\in(0,\frac{1}{2})$ and some $\alpha>0$.
\end{theorem}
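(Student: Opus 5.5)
We reduce to Theorem \ref{thm-applicationmultigamma}, or more precisely to Theorem \ref{thm:exp-peak-and-asymptotic}, via Galkin's random walk interpretation. Let $f(\mathbf{x})=\sum_{v\in S}c_v\mathbf{x}^v$ be the convenient weak LG model, with $S\subset\bZ^m$ finite and $c_v>0$. Set $\Lambda:=\sum_v c_v$. Then $\mathrm{Cst}(f^n)=\Lambda^n P_n$, where $P_n$ is the probability that the random walk on $\bZ^m$ with step distribution $\{c_v/\Lambda\}$ returns to the origin at time $n$.

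Since the walk has mean zero drift only after an exponential tilt, we first tilt. Because $0$ lies in the interior of the Newton polytope, the function $\mathbf{x}\mapsto f(\mathbf{x})$ on $\bR_{>0}^m$ is proper and strictly log-convex in $\log\mathbf{x}$. It therefore has a unique critical point $\mathbf{x}_0$, and $T_0:=f(\mathbf{x}_0)>0$. Since the constant term is invariant under $\mathbf{x}\mapsto\mathbf{x}_0\cdot\mathbf{x}$, we have $\mathrm{Cst}(f^n)=T_0^n\,\mathrm{Cst}(\tilde f^n)$ with $\tilde f(\mathbf{x})=f(\mathbf{x}_0\mathbf{x})/T_0$. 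The coefficients of $\tilde f$ sum to $1$ and their mean displacement is $\nabla\log f$ at $\mathbf{x}_0$, which is zero. So $\tilde P_n:=\mathrm{Cst}(\tilde f^n)$ is the return probability of a centered walk with nondegenerate covariance; nondegeneracy holds because $S$ spans $\bR^m$.

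The walk may be periodic. Let $\kappa$ be the period, i.e. the gcd of the $n$ with $\tilde P_n>0$. The Local Central Limit Theorem for lattice walks, restricted to the subgroup generated by $S$, gives $\tilde P_{\kappa n'}\sim c\,(\kappa n')^{-m/2}$ for some $c>0$, and $\tilde P_n=0$ for $\kappa\nmid n$. Since the period divides the degree of any return, one should check that $\tilde P_{\kappa n'}>0$ for all $n'\gg0$, which aperiodicity on the sublattice gives. Setting $A_{n'}:=\tilde P_{\kappa n'}$ (adjusted on finitely many initial terms to be positive, which does not affect the asymptotic statements), we get
\[
\frac{A_{n'+1}}{A_{n'}}=\Bigl(1+\frac1{n'}\Bigr)^{-m/2}(1+o(1)).
\]
Here lies the \emph{main obstacle}. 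The hypothesis of Theorem \ref{thm:exp-peak-and-asymptotic} needs the ratio to be $1+O(1/n')$, and the plain LCLT error $o(1)$ is too weak. I would invoke the LCLT with an Edgeworth-type error term, $\tilde P_{\kappa n'}=c(\kappa n')^{-m/2}(1+O(n'^{-1/2}))$, or better $O(1/n')$, since the first correction is odd and vanishes at the origin for the return probability. This needs finite third moments, which a finite step set certainly has. If needed, one can exploit the weaker condition used in Lemma \ref{lem:An-basic-properties}(iv), namely polynomial control, together with (iii); but the clean route is the refined LCLT.

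With this in hand,
\[
G_X(t)=1+\sum_{n'\ge1}\frac{A_{n'}}{\Gamma(\kappa n'+1)}(T_0t)^{\kappa n'},
\]
which is exactly the form of Theorem \ref{thm:exp-peak-and-asymptotic} with $T=T_0$; equivalently, it is Theorem \ref{thm-applicationmultigamma} with $p=0$, $q=1$, $\beta_1=\kappa$, $b_1=1$, $C=\kappa^{-\kappa}$. Part (ii) of that theorem then yields exponential concentration near $n\approx T_0t$ with window $(T_0t)^{-\nu}$ and parameters $(\alpha,1-2\nu)$ for any $\nu\in(0,\tfrac12)$.

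Finally, we identify $T_0=T_{A,\mathrm{con}}$. Since $n!G_n=\mathrm{Cst}(f^n)=T_0^n\tilde P_n$ and $\tilde P_{\kappa n'}^{1/\kappa n'}\to1$, we get $\varlimsup|n!G_n|^{1/n}=T_0$. Alternatively, Theorem \ref{lem:0}(iii) with $d=1$ recovers the same value from the concentration itself.
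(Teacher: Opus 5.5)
Your argument is correct and follows the paper's skeleton: random walk, tilt to the conifold point, passage to the index/period, LCLT, reduction to Theorem \ref{thm-applicationmultigamma} with $p=0$, $q=1$, and the identification $T_0=T_{A,\mathrm{con}}$. The difference is in how you handle the ratio hypothesis.

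You feed the actual coefficients $A_{n'}=\tilde P_{\kappa n'}$ into Theorem \ref{thm:exp-peak-and-asymptotic}, and this forces a second-order local limit theorem. The estimate $1+O(n'^{-1/2})$ you list first does \emph{not} suffice, since it only yields $A_{n'+1}/A_{n'}=1+O(n'^{-1/2})$. You really need $\tilde P_{\kappa n'}=c\,n'^{-m/2}\bigl(1+O(1/n')\bigr)$. That is true: in the Fourier integral for the return probability, the $n'^{-1/2}$ Edgeworth term is an odd cubic form integrated against a centered Gaussian, hence zero. The $O(1/n')$ remainder needs fourth moments (not third), which finite support supplies. But it has to be proved or cited in that form.

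The paper sidesteps this. It applies Theorem \ref{thm-applicationmultigamma} to the model series with $s_{n'}=n'^{-m/2}$, where the ratio condition is trivial. It then writes $G_{rn'}=c_{rn'}H_{rn'}$ (with $r$ the index, your $\kappa$) and $c_{rn'}=c+O(n'^{-1/2})$. After altering finitely many terms, $c_{rn'}$ lies in $[c_-,c_+]$ with $c_->0$, so each tail ratio of $G_X$ is at most $c_+/c_-$ times that of $H$. Thus the paper needs only the standard first-order LCLT, and really just the two-sided bound $\tilde P_{\kappa n'}\asymp n'^{-m/2}$. Your route buys a direct application of the theorem to the true coefficients, at the price of sharper analytic input. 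Your fallback via Lemma \ref{lem:An-basic-properties}(iv) would instead require re-proving Theorem \ref{thm:package-Pfw-exp-peak} under weaker hypotheses rather than citing it.

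Finally, the LCLT needs aperiodicity, so you should not apply it on the lattice generated by $S$. Apply it instead to the $\kappa$-step walk, whose step law is given by the coefficients of $\tilde f^{\kappa}$. That walk is aperiodic because the return times form an additive semigroup with gcd $\kappa$, so $\tilde P_{\kappa n'}>0$ for $n'\gg0$. This is exactly the paper's passage to $g=f^r$.
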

\begin{proof}
Let $f(\mathrm{x})\in\bR_{\ge0}[x^\pm_1,\dots,x_m^{\pm}]\setminus\{0\}$ be a convenient weak LG model of $X$. Recall that ``weak LG of $X$'' means
\[
G_X(t)=1+\sum_{n>0}G_nt^n,\text{ with }G_n=\frac{\mathrm{Cst}(f^n)}{n!},
\]
where $\mathrm{Cst}(f^n)$ is the constant term of the Laurent polynomial $f(\mathrm{x})^n$,
and ``convenient" means the Newton polytope of $f(\mathrm{x})$ contains the origin in its interior.  From \cite{Ga14}, the convenient condition implies that \(f|_{(\bR_{>0})^m}\) has a unique critical point \(\mathbf{x}_{\mathrm{con}}\in(\bR_{>0})^m\), called the \emph{conifold point}, and the corresponding critical value \(T_{\mathrm{con}} := f(\mathbf{x}_{\mathrm{con}})>0\) is called the \emph{conifold value}; moreover, we have \(\mathrm{Hess}(f)(\mathbf{x}_{\mathrm{con}}):=\det(\partial_{\log x_i}\partial_{\log x_j}f(\mathbf{x}_{\mathrm{con}}))>0\). To prove Theorem \ref{thm-mainthm}, we only need to show that the summand of $G_X(t)$ concentrate exponentially near $n\approx T_{\mathrm{con}}t$, since Theorem \ref{lem:0} (iii) gives $T_{\mathrm{con}}=\varlimsup (n!G_n)^{1/n}=T_{A,\mathrm{con}}$.

From \cite[Lemma 5.8]{Galkin2024RevisitingGammaI}, there exists a unique
\(r\in\bZ_{>0}\) such that \(\mathrm{Cst}(f^n)=0\) if \(r\nmid n\), and
\(\mathrm{Cst}(f^{rn'})>0\) for all sufficiently large \(n'\). We call \(r\)
the \emph{index} of \(f\). Put
\[
g(\mathbf{x}) := f(\mathbf{x})^r,\qquad T_g:=T_{\mathrm{con}}^r .
\]
Then \(g\) is again convenient with non-negative coefficients and has index one, \(\mathbf{x}_{\mathrm{con}}\) is the conifold point of \(g\), and \(T_g\) is the conifold value of $g$.

Let us recall Galkin's random walk interpretation of $\mathrm{Cst}(g^{n'})$ \cite{Galkin2012SplitNotes}. For $\mathbf{x}=(x_1,\dots,x_m)\in(\bC^\times)^m$ and $\ell=(l_1,\dots,l_m)\in\bZ^m$, set $\mathbf{x}^\ell:=\prod_{i=1}^mx_i^{l_i}$. Then we can write \(g(\mathbf{x})=\sum_{\ell\in\bZ^m} c_\ell \mathbf{x}^{\ell}\) with \(c_\ell\ge 0\), so that only finitely many \(c_\ell\)'s are non-zero. Define \(p_\ell:=\frac{c_\ell\mathbf{x}_{\mathrm{con}}^\ell}{T_g}\). Then \(\sum_{\ell} p_\ell=1\) since $T_g=g(\mathbf{x}_{\mathrm{con}})$. Now let $\mathbb{L}\subset\bZ^m$ be the lattice generated by $\{\ell\in\bZ^m:c_{\ell}\ne0\}$, and let $(p_\ell)_{\ell\in L}$ be the step distribution of a random walk on $\mathbb{L}$ starting at the origin. Then the lattice $\mathbb{L}$ has \emph{rank $m$} since $g$ is convenient. This step distribution has \emph{finite moments of all orders} (since only finitely many $p_\ell$'s are non-zero) and \emph{zero mean}, i.e., \(\sum_\ell p_\ell\,\ell=0\), since \(\mathbf{x}_{\mathrm{con}}\) is the critical point of \(g|_{(\bR_{>0})^m}\). The random walk is \emph{irreducible} by the definition of $\mathbb{L}$. Moreover, the \(n'\)-step return probability to the origin is 
\[
\sum_{\ell_1+\cdots+\ell_{n'}=0}p_{\ell_1}\cdots p_{\ell_{n'}}=\sum_{\ell_1+\cdots+\ell_{n'}=0}\frac{c_{\ell_1}\cdots c_{\ell_{n'}}}{T_{g}^{n'}}=\frac{\mathrm{Cst}(g^{n'})}{T_g^{n'}}.
\]
Since \(g\) has index one, it follows that this probability is non-zero for all sufficiently large $n'$; hence the random walk is \emph{aperiodic}. Applying the Local Central Limit Theorem \cite[Theorem 2.3.5]{LL10} to the $n'$-step return probability and noting \cite[(2.2)]{LL10}, we obtain
\begin{equation*}\label{eq:randomwalk}
\frac{\mathrm{Cst}(g^{n'})}{T_g^{\,n'}}
=
\frac{1}{(n')^{\frac{m}{2}}}
\left(c+O\!\left(\frac{1}{\sqrt{n'}}\right)\right),
\quad n'\to+\infty,
\end{equation*}
for some constant \(c>0\). Using $g=f^r$, this implies $G_n=0$ when $r\nmid n$ and
\begin{equation*}\label{eq:gn-cst}
G_{rn'}
=
\frac{(n')^{-\frac{m}{2}}T_{\mathrm{con}}^{rn'}}{\Gamma(rn'+1)}\left(c+O\left(\frac{1}{\sqrt{n'}}\right)\right),
\quad n'\to+\infty .
\end{equation*}

Set $H_n=0$ when $r\nmid n$, $H_{rn'}=\frac{n'^{-\frac{m}{2}}T_{\mathrm{con}}^{rn'}}{\Gamma(rn'+1)}$ for $n'\ge1$, and $H_0=1$. For $H(t):=\sum_{n\ge0}H_nt^n$, since \[H(t)=\sum_{n'\ge0}s_{n'}\frac{(T_{\mathrm{con}}t)^{rn'}}{\Gamma(rn'+1)},\quad s_0:=1, \quad s_{n'}:=n'^{-\frac{m}{2}},n'\ge1.\]
it follows from Theorem \ref{thm-applicationmultigamma} that the summands of $H(t)=\sum_{n\ge0}H_nt^n$ concentrate exponentially near \(n\approx T_{\mathrm{con}}t\), with window term $T_{\mathrm{con}}^{-\nu}t^{-\nu}$ and parameters $(\alpha,1-2\nu)$ for some $\alpha>0$.

Now we have $G_X(t)=\sum_{n\ge0}c_nH_nt^n$, where $c_n=1$ when $r\nmid n$ and $c_{rn'}=c + O\!\left(\frac{1}{\sqrt{n'}}\right)$ as $n'\to\infty$. Note that the exponential concentration property is preserved when finitely many terms are replaced, with location polynomial unchanged. So without loss of generality we may assume that the sequence $\{c_n\}$ has positive lower and upper bounds, say $c_-$ and $c_+$. Set $n_\pm(t):= \lfloor T_{\mathrm{con}}t(1\pm T_{\mathrm{con}}^{-\nu}t^{-\nu})\rfloor$, and we have
\(
\frac{|\sum_{n\le n_-(t)} c_n H_nt^n|}{G_X(t)}
\le \frac{c_+}{c_-}\cdot
\frac{\sum_{n\le n_-(t)} H_nt^n}{H(t)}
\)
and
\(\frac{|\sum_{n\ge n_+(t)} c_n H_nt^n|}{G_X(t)}
\le \frac{c_+}{c_-}\cdot
\frac{\sum_{n\ge n_+(t)} H_nt^n}{H(t)}\). This implies that the summands of $G_X(t)$ concentrate exponentially near $n\approx T_{\mathrm{con}}t$. 

Now the proof of Theorem \ref{thm-mainthm} is finished.
\end{proof}

\vspace{0.5cm}

\printbibliography

\end{document}